\numberwithin{equation}{section}
\newtheoremstyle{theor}{6pt plus 1pt minus 1pt}{6pt plus 1pt minus 1pt}{\slshape}{}{\bfseries}{.}{5pt plus 1pt minus 1pt}{}
\newtheoremstyle{def}{6pt plus 1pt minus 1pt}{6pt plus 1pt minus 1pt}{}{}{\bfseries}{.}{5pt plus 1pt minus 1pt}{}
\newtheoremstyle{rmk}{6pt plus 1pt minus 1pt}{6pt plus 1pt minus 1pt}{}{}{\bfseries}{.}{5pt plus 1pt minus 1pt}{}
\newtheoremstyle{claim}{6pt plus 1pt minus 1pt}{6pt plus 1pt minus 1pt}{}{}{\bfseries}{.}{5pt plus 1pt minus 1pt}{}
\theoremstyle{theor}
\newtheorem{newstatement}{newstatement}
\newtheorem{lemma}[newstatement]{Lemma}
\newtheorem{theorem}[newstatement]{Theorem}
\newtheorem*{theorem*}{Theorem 2}
\newtheorem{proposition}[newstatement]{Proposition}
\theoremstyle{def}
\newtheorem{definition}[newstatement]{Definition}
\newtheorem{question}[newstatement]{Question}
\theoremstyle{rmk}
\newtheorem*{example*}{Example}
\theoremstyle{claim}
\theoremstyle{theor}
\newtheorem{thm}{Theorem}
\theoremstyle{goa}
\newtheorem{col}[thm]{Collection}
\newtheorem{con}[thm]{Construction}
\newtheorem{conj}[thm]{Conjecture}
\let\expandafter\oldproof\csname\string\proof\endcsname
\let\oldendproof\endproof
\renewenvironment{proof}[1][\proofname]{%
  \oldproof[\slshape #1]%
}{\oldendproof}
\def\provedboxcontents#1{$\square$}
\newsavebox\myboxA
\newsavebox\myboxB
\newlength\mylenA
\newcommand*\xoverline[2][0.75]{%
    \sbox{\myboxA}{$\m@th#2$}%
    \setbox\myboxB\null
    \ht\myboxB=\ht\myboxA%
    \dp\myboxB=\dp\myboxA%
    \wd\myboxB=#1\wd\myboxA
    \sbox\myboxB{$\m@th\overline{\copy\myboxB}$}
    \setlength\mylenA{\the\wd\myboxA}
    \addtolength\mylenA{-\the\wd\myboxB}%
    \ifdim\wd\myboxB<\wd\myboxA%
       \rlap{\hskip 0.5\mylenA\usebox\myboxB}{\usebox\myboxA}%
    \else
        \hskip -0.5\mylenA\rlap{\usebox\myboxA}{\hskip 0.5\mylenA\usebox\myboxB}%
    \fi}
\newcommand{\R}{\mathbb{R}}
\newcommand{\Z}{\mathbb{Z}}
\newcommand{\CP}{{\mathbb C\mkern-0.5mu\mathrm P}}
\DeclareMathOperator{\Ks}{KS}
\newcommand{\RP}{{\mathbb R\mkern-0.5mu\mathrm P}}
\DeclareMathOperator{\Top}{Top}
\newcommand{\simtimes}{\mathbin{\widetilde{\smash{\times}}}}
\DeclareMathOperator{\Arf}{arf}
\DeclareMathOperator{\Pin}{Pin}
\DeclareMathOperator{\TopPin}{TopPin}
\DeclareMathOperator{\Sp}{Spin}
\DeclareMathOperator{\Ar}{arf}
\newcommand{\cs}{\mathbin{\#}}
\begin{document}

\author{Rafael Torres}

\title[Nonorientable 4-manifolds with $\pi_1 = \Z/2p$.]{Topological classification of certain nonorientable 4-manifolds with cyclic fundamental group of order 2 mod 4.}

\address{Scuola Internazionale Superiori di Studi Avanzati (SISSA)\\ Via Bonomea 265\\34136\\Trieste\\Italy}

\email{rtorres@sissa.it}

\subjclass[2020]{Primary 57K40}

\maketitle

\emph{Abstract}: We show that the classification up to homeomorphism of closed topological nonorientable 4-manifolds with fundamental group of order 2 due to Hambleton-Kreck-Teichner can be used to classify a large set of such 4-manifolds with cyclic fundamental group of order 2p for every odd $p > 1$. This is done through a simple cut-and-paste construction, and classical and modified surgery theory are used only through results of Hambleton-Kreck-Teichner and Khan. It is plausible that this set comprises all closed topological nonorientable 4-manifolds with $\pi_1 = \Z/2p$.  We collect several interesting questions whose answers would guarantee a complete classification.

\section{Introduction.}\label{Introduction}

Hambleton-Kreck-Teichner obtained a classification up to homeomorphism of closed topological nonorientable 4-manifolds with fundamental group of order two in \cite[Theorems 1 and 2]{[HKT]}, where they also provided an explicit list of examples that realize every homeomorphsim class \cite[Theorem 3]{[HKT]}. They showed that any homeomorphism class of such a 4-manifold $X$ is determined by\begin{itemize}
\item its Euler characteristic $\chi(X)$, 
\item the Stiefel-Whitney number $w_1(X)^4$, 
\item its Kirby-Siebenmann invariant $\Ks(X)$ and
\item a $\Z/8$-valued Arf invariant $\Ar(X, \Phi)$ for a primitive $\TopPin^c$-structure $(X, \Phi_X)$.
\end{itemize}

The purpose of this paper is to extend Hambleton-Kreck-Teichner's results to 4-manifolds with cyclic fundamental group of order $2p$ for any odd $p > 1$. We begin by recalling a construction introduced in \cite{[BaisTorres]}.

\begin{con}\label{Construction Main}Take the total space of the nonorientable 3-sphere bundle over the circle $S^3\simtimes S^1$ and let $\alpha\subset S^3\simtimes S^1$ be a locally flat simple closed loop whose homotopy class generates the infinite cyclic fundamental group $\pi_1(S^3\simtimes S^1) = \Z$. Let $p > 1$ be an odd integer and denote by $\alpha^p\subset S^3\simtimes S^1$ the locally flat orientation-reversing simple closed loop that represents the homology class $[\alpha]^p\in H_1(S^3\simtimes S^1; \Z) = \Z$. Its tubular neighborhood is homeomorphic to the total space of the nonorientable 3-disk bundle over the circle $\nu(\alpha^p) = D^3\simtimes S^1$ with boundary $\partial \nu(\alpha^p) = S^2\widetilde{\times} S^1$ given by the nonorientable 2-sphere bundle over $S^1$ \cite{[FNOP]}. Construct the compact topological nonorientable 4-manifold\begin{equation}\label{Nonorientable Piece}
N_p=(S^3 \simtimes S^1) \setminus \nu(\alpha^p)
\end{equation}with boundary $\partial N_p = S^2\simtimes S^1$.

The initial data is a closed topological nonorientable 4-manifold $X$with $\pi_1(X) = \Z/2$ and let $\alpha_X\subset X$ be a locally flat closed simple loop whose homotopy class generates the fundamental group of $X$. Assemble the closed topological nonorientable 4-manifold\begin{equation}\label{Nonorientable Construcion 2p}X_{2p}:= (X\setminus \nu(\alpha))\cup N_p.\end{equation}We omit the choice of gluing homeomorphism in (\ref{Nonorientable Construcion 2p}) from the notation since any self-homeomorphism of $\partial(X\setminus \nu(\alpha)) = S^2\widetilde{\times} S^1$ extends to $N_p$ by \cite[Theorem A]{[BaisTorres]}. The Seifert-van Kampen theorem implies that the 4-manifold (\ref{Nonorientable Construcion 2p}) has fundamental group $\Z/2p$.\end{con}



We denote by $\ast M$ the homotopy equivalent twin of a closed topological 4-manifold $M$. The 4-manifold $\ast M$ is a closed topological 4-manifold that is homotopy equivalent to $M$ with Kirby-Siebenmann invariant $\Ks(\ast M) \neq \Ks(M)$ \cite{[KirbySiebenmann]}. Let $M_{E_8}$ be the closed simply connected topological 4-manifold with intersection form $E_8$ \cite[Section 8.1]{[FNOP]}. If the Whitehead group of $\pi_1(M)$ and the natural homomorphism $L_4(1)\rightarrow L_4(\pi_1(M), -)$ both vanish, it is known that the twin $\ast M$ can be constructed by performing surgery to the normal mal $M\cs M_{E_8}\rightarrow M$ by the exactness of the topological surgery sequence see \cite[p. 650 - 651]{[HKT]}. The first contribution of Construction \ref{Construction Main} is the construction of the homotopy equivalent twin $\ast M$ of a 4-manifold $M$ whose fundamental group need not satisfy the aforementioned conditions.

An application of Construction \ref{Construction Main} to each representative 4-manifold of a given homeomorphism class listed in \cite[Theorem 3]{[HKT]} allows us to introduce a somewhat large set of 4-manifolds with $\pi_1 = \Z/2p$, which we call Collection \ref{Collection Manifolds}. The nonorientable 2-sphere bundle over the real projective plane with vanishing second Stiefel-Whitney class is denoted by $S(2\gamma\oplus \R)$ and $\cs_{S^1}r\cdot \RP^4$ denotes the circle sum of $r$ copies of $\RP^4$ with $1\leq r \leq 4$ \cite[p. 651]{[HKT]}.  We abuse notation in what follows regarding the symbol to denote the loop in (\ref{Nonorientable Construcion 2p}). The Chern manifold is denoted by $\ast \CP^2$.

\begin{col}\label{Collection Manifolds}Closed topological nonorientable 4-manifolds with finite cyclic fundamental group of order $2p$ for every odd $p > 1$, $k\in \Z_{> 0}$ and $1\leq r \leq 4$.\begin{equation}\label{Set 1}A_{2p, k}: = ((\RP^2\times S^2)\setminus \nu(\alpha))\cup N_p)\cs(k - 1)\cdot(S^2\times S^2),\end{equation}\begin{equation}\label{Set 2}\ast A_{2p, k}: = (\ast(\RP^2\times S^2)\setminus \nu(\alpha))\cup N_p)\cs(k - 1)\cdot(S^2\times S^2),\end{equation}\begin{equation}\label{Set 3}B_{2p, k}: = ((S(2\gamma\oplus \R)\setminus \nu(\alpha))\cup N_p)\cs(k - 1)\cdot(S^2\times S^2),\end{equation}\begin{equation}\label{Set 4}\ast B_{2p, k}: = (\ast(S(2\gamma\oplus \R)\setminus \nu(\alpha))\cup N_p)\cs(k - 1)\cdot(S^2\times S^2),\end{equation}\begin{equation}\label{Set 5}R^r_{2p, k}: = (\cs_{S^1}r\cdot \RP^4)\setminus \nu(\alpha))\cup N_p)\cs (k - 1)\cdot(S^2\times S^2),\end{equation}\begin{equation}\label{Set 6}\ast R^r_{2p, k}: = (\ast(\cs_{S^1}r\cdot \RP^4))\setminus \nu(\alpha))\cup N_p)\cs (k - 1)\cdot(S^2\times S^2),\end{equation}\begin{equation}\label{Set 7}C_{2p, k}:= R^1_{2p, 1}\cs k\cdot\CP^2,\end{equation}\begin{equation}\label{Set 8}\ast C_{2p, k}:= \ast R^1_{2p, 1}\cs k\cdot\CP^2,\end{equation}\begin{equation}\label{Set 9}D_{2p, k}: = ((\RP^2\times S^2)\setminus \nu(\alpha))\cup N_p)\cs k \cdot\CP^2,\end{equation}\begin{equation}\label{Set 10}\ast D_{2p, k}: = (\ast(\RP^2\times S^2)\setminus \nu(\alpha))\cup N_p)\cs k \cdot\CP^2,\end{equation}\begin{equation}\label{Set 11}E_{2p}:= R^1_{2p, 1}\cs \ast\CP^2\end{equation}and\begin{equation}\label{Set 12}F_{2p}: = (\ast(\RP^4\cs \ast\CP^2)\setminus \nu(\alpha))\cup N_p.\end{equation}
\end{col}

There is a 1-to-1 correspondence between the homeomorphism classes in Collection \ref{Collection Manifolds} and the homeomorphism classes of the initial data used in Construction \ref{Construction Main}. We show in Section \ref{Section Invariants} that the set of invariants $\{\chi, w_1^4, \Ks, \Ar\}$ is unchanged by Construction \ref{Construction Main}. We gather these observations in our next main theorem. If the 4-manifold $M$ admits a smooth structure, then we denote by $\eta(M,\Phi_M)$ the $\eta$-invariant for a $\Pin^c$-structure $(M, \Phi_M)$ as defined by Gilkey \cite{[Gilkey]}.

\begin{thm}\label{Theorem Homeomorphism Invariants}Let $X_{2p}$ and $X_{2p}'$ be closed topological nonorientable 4-manifolds that are contained in Collection \ref{Collection Manifolds}. 

$\bullet$ There is a homeomorphism\begin{equation}\label{Homeomorphism}X_{2p}\rightarrow X_{2p}'\end{equation}if and only if there is a homeomorphism\begin{equation}\label{Homeomorphism Data}X\rightarrow X',\end{equation}where $X$ and $X'$ are the initial data used in Construction \ref{Construction Main} to obtain $X_{2p}$ and $X_{2p}'$, respectively.

$\bullet$ There is a homeomorphism (\ref{Homeomorphism}) if and only if
\begin{enumerate}
\item $\chi(X_{2p}) = \chi(X_{2p}')$,
\item $w_1(X_{2p})^4 = w_1(X_{2p}')^4$,
\item $\Ks(X_{2p}) = \Ks(X_{2p}')$, and
\item $\Ar(X_{2p}, \Phi_{X_{2p}})= \pm \Ar(X_{2p}', \Phi_{X'_{2p}})$
\end{enumerate}
for some primitive $\TopPin^c$-structures $(X_{2p}, \Phi_{X_{2p}})$ and $(X_{2p}', \Phi_{X'_{2p}})$.

$\bullet$ If $X_{2p}$ and $X_{2p}'$ admit a smooth structure, then the homeomorphism (\ref{Homeomorphism}) exists if and only if the identities of Items (1) and (2) are satisfied as well as the identity\begin{equation*}\eta(X_{2p}, \Phi_{X_{2p}}) = \pm \eta(X_{2p}', \Phi_{X'_{2p}})\end{equation*}for some primitive $\Pin^c$-structures $(X_{2p}, \Phi_{X_{2p}})$ and $(X_{2p}', \Phi_{X'_{2p}})$.
\end{thm}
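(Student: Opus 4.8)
The plan is to deduce all three bullet points from three inputs: (a) the naturality of Construction \ref{Construction Main}, namely that a homeomorphism of the initial data induces one of the assembled 4-manifolds; (b) the invariance statement of Section \ref{Section Invariants}, that $\chi$, $w_1^4$, $\Ks$ and $\Ar$ (the latter up to sign, over primitive $\TopPin^c$-structures) are unchanged when passing from $X$ to $X_{2p}$; and (c) the Hambleton--Kreck--Teichner classification \cite[Theorems 1, 2 and 3]{[HKT]}, which applies precisely to the manifolds $X$ to which Construction \ref{Construction Main} is applied in Collection \ref{Collection Manifolds}. The logical point is that (c) concerns $\pi_1 = \Z/2$ only, so using it is not circular.

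For the first bullet, the implication $(\ref{Homeomorphism Data})\Rightarrow(\ref{Homeomorphism})$ is (a): given a homeomorphism $f\colon X\to X'$, the loop $f(\alpha_X)$ is a locally flat simple closed loop whose class generates $\pi_1(X')=\Z/2$, and, since any two such loops are ambiently isotopic and -- being orientation-reversing -- have tubular neighborhood homeomorphic to $D^3\simtimes S^1$, one may isotope $f$ so that it carries $\nu(\alpha_X)$ onto $\nu(\alpha_{X'})$, hence restricts to a homeomorphism $X\setminus\nu(\alpha_X)\to X'\setminus\nu(\alpha_{X'})$. Its restriction to the boundary $S^2\simtimes S^1$ extends over $N_p$ by \cite[Theorem A]{[BaisTorres]}, giving the homeomorphism $(\ref{Homeomorphism})$. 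Conversely, a homeomorphism $(\ref{Homeomorphism})$ forces $\chi$, $w_1^4$, $\Ks$ to coincide and the Arf invariants to agree up to sign for $X_{2p}$ and $X'_{2p}$; by (b) the corresponding four invariants of $X$ and $X'$ coincide, and (c) then yields the homeomorphism $(\ref{Homeomorphism Data})$.

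The second bullet now follows formally: that $(\ref{Homeomorphism})$ implies Items (1)--(4) is homeomorphism invariance of $\chi$, $w_1^4$, $\Ks$ and of $\Ar$ (up to sign) over primitive $\TopPin^c$-structures; conversely Items (1)--(4) together with (b) give $X$ and $X'$ matching invariants, so $X\cong X'$ by (c) and then $X_{2p}\cong X'_{2p}$ by the first bullet. For the third bullet, a smooth 4-manifold has vanishing Kirby--Siebenmann invariant, so Item (3) is automatic for smooth $X_{2p}$ and $X'_{2p}$, and it remains to see that Item (4) may be replaced by $\eta(X_{2p},\Phi_{X_{2p}})=\pm\eta(X'_{2p},\Phi_{X'_{2p}})$. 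This is legitimate because on a smooth nonorientable 4-manifold Gilkey's $\eta$-invariant \cite{[Gilkey]} and the $\Z/8$-valued Arf invariant determine one another (this is the point at which Khan's computations enter), applied here directly to the smooth manifolds $X_{2p}$ and $X'_{2p}$; one then invokes the second bullet.

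I expect the main obstacle to be input (a), and within it the claim that any two locally flat simple closed loops generating $\pi_1(X)=\Z/2$ are ambiently isotopic, with a standard nonorientable tubular neighborhood: this is what makes $X_{2p}$ well defined independently of the choice of loop in Construction \ref{Construction Main} and lets a homeomorphism of the initial data descend to the cut-and-paste. The remaining steps are bookkeeping with Section \ref{Section Invariants} and with the Hambleton--Kreck--Teichner and Khan classifications, which are taken as given.
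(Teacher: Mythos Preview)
Your approach is essentially the paper's: for $(\ref{Homeomorphism Data})\Rightarrow(\ref{Homeomorphism})$ pass to a homeomorphism of complements and extend over $N_p$ via \cite{[BaisTorres]} (the paper packages your Theorem~A step as \cite[Theorem B]{[BaisTorres]}, restated there as Theorem \ref{Theorem Homeomorphism}), and for the converse pull the invariants $\{\chi, w_1^4, \Ks, \Ar\}$ back to $X, X'$ via Lemma \ref{Lemma Identities} and Proposition \ref{Proposition Value Pin} and then apply \cite[Theorem 2]{[HKT]}; the remaining bullets are bookkeeping exactly as you say. One correction: the relation between $\eta$ and $\Ar$ that you invoke in the third bullet is due to Hambleton--Kreck--Teichner \cite[Theorem 4, \S 5]{[HKT]}, not Khan --- Khan's result \cite{[Khan]} is a cancellation theorem for $S^2\times S^2$ summands and enters only in the proof of Theorem \ref{Theorem Cancellation Homeomorphism Classes}, not here.
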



Much like the Arf invariant $\Ar(M, \Phi_M)$ is a $\TopPin^c$-bordism invariant \cite{[HKT]},  $\eta(M, \Phi_M)$ is a $\Pin^c$-bordism invariant \cite{[Gilkey], [BahriGilkey]}. A formula that relates these two invariants was obtained by Hambleton-Kreck-Teichner \cite[Theorem 4]{[HKT]} and it is used in the proof of Theorem \ref{Theorem Homeomorphism Invariants}. 

The aforementioned work of Hambleton-Kreck-Teichner and work of Ruberman-Stern \cite{[RubermanStern]} show that a closed topological nonorientable 4-manifold with fundamental group of order 2 admits a smooth structure if and only if its Kirby-Siebenmann invariant vanishes; see \cite{[KirbySiebenmann]} for further details on this $\Z/2$-valued invariant. In this direction, we prove the following result. 

\begin{thm}\label{Theorem KS}A closed topological nonorientable 4-manifold that is contained in Collection \ref{Collection Manifolds} admits a smooth structure if and only if its Kirby-Siebenmann invariant vanishes. 
\end{thm}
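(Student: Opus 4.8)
The plan is to prove the two implications separately. One direction is immediate: a closed topological $4$-manifold that admits a smooth structure has vanishing Kirby-Siebenmann invariant \cite{[KirbySiebenmann]}, so it remains to exhibit smooth structures on those members of Collection \ref{Collection Manifolds} whose Kirby-Siebenmann invariant vanishes.

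For that direction, write $X_{2p} = (X\setminus \nu(\alpha))\cup N_p$ as in Construction \ref{Construction Main}. Each $\CP^2$- or $(S^2\times S^2)$-summand appearing in the definition of a member of Collection \ref{Collection Manifolds} is formed away from $\nu(\alpha)$ and from the gluing region, hence may be absorbed into the initial data; so I may assume that $X$ is a closed topological nonorientable $4$-manifold with $\pi_1(X) = \Z/2$. By the computation of the invariant set $\{\chi, w_1^4, \Ks, \Ar\}$ under Construction \ref{Construction Main} carried out in Section \ref{Section Invariants}, one has $\Ks(X) = \Ks(X_{2p}) = 0$, and the work of Hambleton-Kreck-Teichner \cite{[HKT]} together with that of Ruberman-Stern \cite{[RubermanStern]} then provides a smooth structure on $X$, which I fix. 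This already handles the case that looks most delicate, namely $F_{2p}$, whose initial data $\ast(\RP^4 \cs \ast\CP^2)$ is smoothable even though it is presented as a twin of a connected sum, precisely because the Kirby-Siebenmann invariants of the two summands cancel under the twinning.

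It remains to assemble $X_{2p}$ smoothly. Represent the generator of $\pi_1(X)$ by a smoothly embedded orientation-reversing loop $\alpha$ with smooth tubular neighborhood $\nu(\alpha)\cong D^3\simtimes S^1$, so that $X\setminus \nu(\alpha)$ is a smooth compact $4$-manifold with boundary the nonorientable $S^2$-bundle $S^2\simtimes S^1$. Likewise $S^3\simtimes S^1$ is smooth, the class $[\alpha]^p\in H_1(S^3\simtimes S^1;\Z)$ is carried by a smooth orientation-reversing loop since $p$ is odd, and $N_p$ is a smooth compact $4$-manifold with boundary $S^2\simtimes S^1$. Gluing the two pieces along a diffeomorphism between their boundaries, both copies of $S^2\simtimes S^1$, yields a smooth closed $4$-manifold $W$. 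Since freely homotopic locally flat loops in a $4$-manifold are ambient isotopic (codimension three), and since by \cite[Theorem A]{[BaisTorres]} every self-homeomorphism of $S^2\simtimes S^1$ extends over $N_p$, the homeomorphism type of the output of Construction \ref{Construction Main} is independent of the choices of loop and of gluing homeomorphism; hence $W$ is homeomorphic to $X_{2p}$ and $X_{2p}$ is smoothable. Combined with Theorem \ref{Theorem Homeomorphism Invariants}, this shows that the smoothable members of Collection \ref{Collection Manifolds} are exactly those assembled from smoothable initial data, i.e. those with vanishing Kirby-Siebenmann invariant.

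The step I expect to require the most care is the compatibility of the cut-and-paste of Construction \ref{Construction Main} with smooth structures: one must check that the locally flat loops $\alpha$ and $\alpha^p$ may be taken smoothly embedded without changing the homeomorphism type of the output (a low-codimension smoothing-and-isotopy statement) and that a single smooth gluing suffices (which is immediate from \cite[Theorem A]{[BaisTorres]}, since the result is insensitive to the gluing up to homeomorphism). Modulo these routine points the argument is formal; the only genuinely external inputs are the invariance of $\Ks$ under Construction \ref{Construction Main} established in Section \ref{Section Invariants} and the Hambleton-Kreck-Teichner and Ruberman-Stern smoothability criterion for $\pi_1 = \Z/2$.
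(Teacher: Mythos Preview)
Your proposal is correct and follows essentially the same approach as the paper: reduce via Lemma \ref{Lemma Identities}(3) to the $\pi_1 = \Z/2$ smoothability criterion of Hambleton--Kreck--Teichner and Ruberman--Stern, and then observe that Construction \ref{Construction Main} can be performed smoothly when the initial data is smooth. The paper's proof is terser, simply asserting the latter point, while you spell out the smooth assembly and the independence of choices up to homeomorphism.
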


The results presented so far raise the following questions.

\begin{question}\label{Question All} Is Collection \ref{Collection Manifolds} a complete list of representatives of all homeomorphism classes of closed nonorientable topological 4-manifolds with cyclic fundamental group of order $2p$ for odd $p > 1$?
\end{question}

\begin{question}\label{Question All'}Does the set of invariants\begin{equation*}\{\chi, w_1^4, \Ks, \Ar\}\end{equation*}classify closed topological nonorientable 4-manifolds with cyclic fundamental group of order $2p$ for odd $p > 1$ up to homeomorphism?
\end{question}

We obtain the following partial answers to these questions. An affirmative answer is obtained if one considers stable homeomorphism classes instead. Recall that two topological 4-manifolds $M$ and $M'$ are stably homeomorphic if there are integers $k$ and $k'$ such that the connected sums $M\cs k(S^2\times S^2)$ and $M'\cs k'(S^2\times S^2)$ are homeomorphic. 

\begin{thm}\label{Theorem Stable Homeomorphism Classes} The 4-manifolds

\begin{center}$R^1_{2p, 1}\cs \CP^2$, $R^1_{2p, 1}\cs 2\cdot \CP^2$, $A_{2p, 1}\cs \CP^2$, $A_{2p, 1}\cs 2\cdot \CP^2$, $\ast R^1_{2p, 1}\cs \CP^2$, $\ast R^1_{2p, 1}\cs 2\cdot \CP^2$, $\ast A_{2p, 1}\cs \CP^2$, $\ast A_{2p, 1}\cs 2\cdot \CP^2$,\end{center}

\begin{center}$A_{2p, 1}$, $\ast A_{2p, 1}$, $R^r_{2p, 1}$, $\ast R^r_{2p, 1}$, $B_{2p, 1}$ and $\ast B_{2p, 1}$\end{center}for $1\leq r \leq 4$ realize all 20 stable homeomorphism classes of closed topological nonorientable 4-manifolds with finite cyclic fundamental group of order $2p$ for odd $p > 1$.
\end{thm}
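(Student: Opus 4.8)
The plan is to classify the manifolds up to stable homeomorphism by modified surgery, reducing the case $\pi_1=\Z/2p$ to the case $\pi_1=\Z/2$ treated by Hambleton--Kreck--Teichner and Khan, and then to separate the twenty candidates by their stable invariants. By Kreck's modified surgery theorem --- which applies in the topological category because $\Z/2p$ is finite --- two closed $4$-manifolds are stably homeomorphic if and only if they admit normal $1$-smoothings into a common normal $1$-type $\xi\colon B\to B\mathrm{STOP}$ that are bordant over $\xi$, up to the action of $\Aut(\xi)$. The first task is therefore to show that the odd part of the fundamental group is invisible here. Since $p$ is odd one has $B\Z/2p\simeq B\Z/2\times B\Z/p$, and $w_1$ of a closed nonorientable $4$-manifold with $\pi_1=\Z/2p$ factors through the projection onto $\Z/2$; I would check that the normal $1$-type of such a manifold splits off a factor $B\Z/p$ and that the manifold carries a normal $1$-smoothing compatible with this splitting. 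Localizing at $p$, the classifying space $B\Z/2$ is $\Z_{(p)}$-acyclic, and $\widetilde H_\ast(B\Z/p;\Z)$ is concentrated in odd degrees and $p$-primary, while the relevant simply connected normal $1$-type has $\Omega_j=0$ for $j=1,3$ and $\Omega_2$ is $2$-primary; an Atiyah--Hirzebruch argument then gives that the extra factor $B\Z/p$ contributes nothing to the reduced bordism group in degree $4$. This yields a bijection between the set of stable homeomorphism classes of closed nonorientable $4$-manifolds with $\pi_1=\Z/2p$ and the corresponding set for $\pi_1=\Z/2$, which has exactly twenty elements by the modified-surgery computation underlying \cite[Theorems 1 and 2]{[HKT]} together with the results of Khan.

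It remains to show that the twenty manifolds in the statement lie in twenty distinct stable classes. The Euler characteristic modulo $2$, the Stiefel--Whitney number $w_1^4$, the Kirby--Siebenmann invariant $\Ks$, and the $\Z/8$-valued Arf invariant $\Ar(-,\Phi)$ taken up to sign are all $\TopPin^c$-bordism invariants, and $S^2\times S^2$ is $\TopPin^c$-nullbordant, so each of them is an invariant of the stable homeomorphism type. By the computations of Section \ref{Section Invariants}, Construction \ref{Construction Main} and the connected summands $S^2\times S^2$ and $\CP^2$ occurring in Collection \ref{Collection Manifolds} leave $\chi$, $w_1^4$ and $\Ks$ fixed and shift $\Ar$ in a controlled way (a single $\CP^2$-summand shifts $\Ar$ by $\pm1$); hence for each of the twenty manifolds the quadruple $(\chi,w_1^4,\Ks,\Ar)$ is computed from that of the corresponding $\Z/2$-initial datum --- namely $\RP^2\times S^2$, $\ast(\RP^2\times S^2)$, $S(2\gamma\oplus\R)$, $\ast(S(2\gamma\oplus\R))$, $\cs_{S^1}r\cdot\RP^4$ and $\ast(\cs_{S^1}r\cdot\RP^4)$ for $1\le r\le4$, possibly with one or two $\CP^2$-summands --- whose values are recorded in \cite[Theorem 3]{[HKT]}. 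A finite check against that list shows that the resulting twenty tuples of stable invariants are pairwise distinct, so the twenty manifolds lie in twenty distinct stable classes; by the first paragraph these are all of them. In the smooth case one runs the same bookkeeping with Gilkey's $\eta$-invariant in place of $\Ar$ via \cite[Theorem 4]{[HKT]}.

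The step I expect to be the main obstacle is the reduction in the first paragraph: making rigorous that the odd-order summand of $\pi_1$ is invisible to the stable classification --- identifying the normal $1$-types of $\Z/2p$-manifolds explicitly, verifying the compatibility of normal smoothings and of the $\Aut(\xi)$-actions under the splitting $B\Z/2p\simeq B\Z/2\times B\Z/p$, and carrying out the Atiyah--Hirzebruch vanishing --- so that the promotion of the Hambleton--Kreck--Teichner--Khan classification from $\Z/2$ to $\Z/2p$ is genuinely an equivalence rather than merely a surjection. A secondary point is the bookkeeping needed to identify the invariants of the manifolds carrying $\CP^2$-summands, which rests on the standard identifications $Y\cs(S^2\times S^2)\cong Y\cs(S^2\simtimes S^2)$ and $Y\cs\CP^2\cong Y\cs\CPbar^{2}$ for a nonorientable $4$-manifold $Y$. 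Once these are in place, the verification that the twenty tuples are distinct is a routine finite computation.
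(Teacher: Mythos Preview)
Your approach is essentially the same as the paper's, but packaged differently. Both rest on Kreck's modified surgery: stable homeomorphism classes are in bijection with bordism classes over the normal $1$-type, and the odd part of $\pi_1$ contributes nothing. The paper does not carry out the reduction you sketch; it simply quotes Debray \cite[Main Theorem]{[Debray]}, who proved exactly that for any finite $\pi_1$ of order $2$ mod $4$ there are $20$ stable classes, broken down as $8$, $2$, $10$ according to the $w_2$-type, and identified these with the appropriate $\Top$-, $\TopPin^-$-, and $\TopPin^+$-bordism groups. Your Atiyah--Hirzebruch vanishing argument (that $B\Z/p$ is invisible because its reduced homology is odd-torsion in odd degrees and the coefficient groups in the relevant range are $2$-primary) is precisely the content of Debray's computation, so the ``main obstacle'' you flag has already been dealt with in the literature.

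For the separation of the twenty candidates, the paper works $w_2$-type by $w_2$-type and reads off the bordism invariants directly ($(w_4,w_2^2,\Ks)$ for type (I), $\Ks$ for type (II), the $\Z/8\oplus\Z/2$ structure of $\Omega_4^{\TopPin^+}$ for type (III)), whereas you transport the HKT invariants $(\chi\bmod 2,\,w_1^4,\,\Ks,\,\Ar)$ through Construction \ref{Construction Main} via Section \ref{Section Invariants}. These are equivalent bookkeepings; your route has the advantage of making the link to Theorem \ref{Theorem Homeomorphism Invariants} explicit, while the paper's is shorter because the bordism groups are already tabulated. One small correction: the twenty stable classes for $\pi_1=\Z/2$ come from HKT (or Debray), not from Khan---Khan's contribution is the cancellation theorem used later in Theorem \ref{Theorem Cancellation Homeomorphism Classes}, not the stable count. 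Also, your remark that a $\CP^2$-summand ``shifts $\Ar$ by $\pm 1$'' is not quite the right formulation in type (I): there $\CP^2$ is detected by $w_4$ rather than by $\Ar$, and in the paper's accounting the type (I) classes are separated by $(w_4,w_2^2,\Ks)$ (equivalently by $(\chi\bmod 2,\,w_1^4,\,\Ks)$), with $\Ar$ playing no role.
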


The proof of Theorem \ref{Theorem Stable Homeomorphism Classes} is given in Section \ref{Section Proof of Theorem Stable Homeomorphism Classes}. In it, we make use of work of Debray \cite[Main Theorem]{[Debray]}, where the author computed the number of stable homeomorphism classes of closed topological nonorientable 4-manifolds with finite fundamental group of order 2 mod 4 under a mild hypothesis by using the modified surgery theory developed by Kreck \cite{[Kreck]}.

A partial answer to Question \ref{Question All} and Question \ref{Question All'} is obtained by coupling these theorems with a cancellation result of Khan \cite{[Khan]}. 

\begin{thm}\label{Theorem Cancellation Homeomorphism Classes}Let $M_{2p}$ be a closed topological nonorientable 4-manifold with cyclic fundamental group of order 2p for $p > 1$ odd.

$\bullet$ The connected sum $M_{2p}\cs 2\cdot (S^2\times S^2)$ is homeomorphic to exactly one 4-manifold in Collection \ref{Collection Manifolds}.

$\bullet$ There is a homeomorphism\begin{equation*}M_{2p}\cs2\cdot(S^2\times S^2)\rightarrow X_{2p}\cs 2\cdot(S^2\times S^2),\end{equation*} where $X_{2p}$ is obtained through Construction \ref{Construction Main} from an initial data 4-manifold $X$ with Euler characteristic $\chi(M_{2p}) = \chi(X)$, Stiefel-Whitney number $w_1(M_{2p})^4 = w_1(X)^4$, Kirby-Siebenmann invariant $\Ks(M_{2p}) = \Ks(X)$ and Arf invariant $\Arf(M_{2p}, \Phi_{M_{2p}}) = \pm \Arf(X, \Phi_X)$ for some primitive $\TopPin^c$-structures $(M_{2p}, \Phi_{M_{2p}})$ and $(X,\Phi_X)$. 
\end{thm}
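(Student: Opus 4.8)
The plan is to reduce the statement about an arbitrary $M_{2p}$ to a statement about 4-manifolds in Collection \ref{Collection Manifolds} by stabilizing twice with $S^2 \times S^2$ and invoking Khan's cancellation theorem together with Theorem \ref{Theorem Stable Homeomorphism Classes}. First I would recall that by Theorem \ref{Theorem Stable Homeomorphism Classes} there are exactly $20$ stable homeomorphism classes of closed topological nonorientable 4-manifolds with $\pi_1 = \Z/2p$, and each is realized by a listed member of Collection \ref{Collection Manifolds}; hence there is some $X_{2p}$ in the collection and some integer $k \geq 0$ with $M_{2p} \cs k(S^2 \times S^2) \cong X_{2p} \cs k'(S^2 \times S^2)$. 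The point is then to push $k$ down to exactly $2$. This is where Khan's cancellation result \cite{[Khan]} enters: over a fundamental group of this type, connected sum with $S^2 \times S^2$ can be cancelled once the manifold already contains two $S^2\times S^2$ summands (equivalently, once the rank of the ``hyperbolic part'' is large enough for the relevant $L$-group stabilization to have kicked in). Applying Khan repeatedly brings the equation down to $M_{2p}\cs 2(S^2\times S^2) \cong X_{2p}\cs 2(S^2\times S^2)$ after possibly replacing $X_{2p}$ by another member of the collection with the same invariants (the collection is closed under $\cs\,S^2\times S^2$ up to adjusting the parameter $k$, by the definitions in Collection \ref{Collection Manifolds}, since $A_{2p,k}\cs(S^2\times S^2)=A_{2p,k+1}$ and similarly for the $\ast A$, $B$, $\ast B$, $D$, $\ast D$, $C$, $\ast C$ families).

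For the uniqueness clause in the first bullet, I would argue that two distinct members of Collection \ref{Collection Manifolds} that become homeomorphic after $\cs 2(S^2\times S^2)$ must already be stably homeomorphic, hence lie in the same one of the $20$ stable classes, and then use the $1$-to-$1$ correspondence of Theorem \ref{Theorem Homeomorphism Invariants} between homeomorphism classes in the collection and the HKT initial data: the invariants $\chi$, $w_1^4$, $\Ks$, $\Ar$ of $M_{2p}\cs 2(S^2\times S^2)$ determine those of $X_{2p}$ (note $\chi$ shifts by $4$ under each $S^2\times S^2$, $w_1^4$ and $\Ks$ and $\pm\Ar$ are unaffected), and by the HKT classification these invariants pin down the initial data $X$ uniquely, hence $X_{2p}$ uniquely. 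For the second bullet I would then read off the invariant identities: since $\cs(S^2\times S^2)$ leaves $w_1^4$, $\Ks$, and $\pm\Ar$ unchanged and changes $\chi$ by a fixed amount, and since Construction \ref{Construction Main} changes none of $\{\chi, w_1^4, \Ks, \Ar\}$ by the computations of Section \ref{Section Invariants} (as asserted in Theorem \ref{Theorem Homeomorphism Invariants}), one gets $\chi(M_{2p})=\chi(X)$, $w_1(M_{2p})^4 = w_1(X)^4$, $\Ks(M_{2p})=\Ks(X)$ and $\Arf(M_{2p},\Phi_{M_{2p}})=\pm\Arf(X,\Phi_X)$ with $X$ the HKT initial data producing $X_{2p}$.

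The main obstacle I anticipate is verifying that Khan's cancellation theorem applies in exactly this setting and brings the stabilization count down to precisely $2$ rather than some larger universal constant: one must check the hypotheses on the fundamental group $\Z/2p$ (order $2$ mod $4$, so $2$-torsion of a controlled form) and on the minimal number of $S^2\times S^2$ summands needed, and confirm that the orientation character $w_1$ is compatible with the nonsimply-connected surgery obstruction groups in Khan's hypotheses. A secondary subtlety is bookkeeping the identification $X_{2p}\cs(S^2\times S^2)$ with another member of the collection across all twelve families in Collection \ref{Collection Manifolds} — in particular the families $E_{2p}$, $F_{2p}$, and the $R^r_{2p,k}$, $\ast R^r_{2p,k}$ for $r \geq 2$ are indexed without a free ``$\cs k(S^2\times S^2)$'' slot, so one must check that after two stabilizations these become homeomorphic to members of one of the families that does carry such a slot (this is exactly the kind of identity $\RP^4\cs\CP^2\cs(S^2\times S^2)\cong \RP^4\cs 2\CP^2\cs\CPbar^2$-type move, combined with $(S^2\widetilde\times S^2)\cong(S^2\times S^2)$ in the nonorientable setting), consistent with the fact that Theorem \ref{Theorem Stable Homeomorphism Classes} lists only the $20$ stable representatives. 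Once these identifications and the applicability of Khan's theorem are nailed down, the rest is the invariant bookkeeping described above.
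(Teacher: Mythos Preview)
Your proposal is correct and follows essentially the same approach as the paper: cite Khan's cancellation theorem to reduce any stable homeomorphism to a homeomorphism after two $S^2\times S^2$ summands, then combine this with Theorem \ref{Theorem Stable Homeomorphism Classes} for the first bullet and Theorem \ref{Theorem Homeomorphism Invariants} for the second. The paper's proof is far terser (it simply states Khan's result for finite fundamental groups and then invokes the two earlier theorems), while you spell out the invariant bookkeeping and flag the family-by-family stabilization issue; those concerns are reasonable but the paper does not address them explicitly.
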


The simplicity of the techniques used in this paper only takes one so far. The missing step to obtain a complete classification is to address the cancellation problem for $S^2\times S^2$ factors as in the statement of Theorem \ref{Theorem Cancellation Homeomorphism Classes}, a well-known hard algebraic problem \cite{[HKT], [Khan]}. An approach to this is to study the stable equivalence classes dictated by the surgery obstruction group $l_5^s(\Z/2p, -)$ as done by Hambleton-Kreck-Teichner \cite{[HKT]}. This is a problem well worth studying and would yield a proof of the following conjecture. 

\begin{conj}\label{Conjecture Cancellation}Theorem \ref{Theorem Cancellation Homeomorphism Classes} holds for $M_{2p}$ without any additional copies of $S^2\times S^2$.
\end{conj}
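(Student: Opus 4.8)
The plan is to reduce the conjecture to the cancellation of $S^2 \times S^2$-summands, and then to attack that cancellation by Kreck's modified surgery, following the strategy Hambleton--Kreck--Teichner used for $\pi_1 = \Z/2$ but over the group $\Z/2p$. By the first bullet of Theorem \ref{Theorem Cancellation Homeomorphism Classes}, for any $M_{2p}$ there is a unique $X_{2p}$ in Collection \ref{Collection Manifolds} and a homeomorphism $M_{2p}\cs 2\cdot(S^2\times S^2)\to X_{2p}\cs 2\cdot(S^2\times S^2)$ under which $\chi$, $w_1^4$, $\Ks$ and $\Ar$ agree (up to sign on the Arf invariant). Thus the conjecture is equivalent to the statement that the two $S^2\times S^2$-factors can be cancelled, i.e.\ that a closed topological nonorientable $4$-manifold with $\pi_1 = \Z/2p$ is determined up to homeomorphism by the quadruple $\{\chi, w_1^4, \Ks, \Ar\}$ exactly as Theorem \ref{Theorem Homeomorphism Invariants} asserts inside the Collection.

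First I would fix the normal $1$-type $B$ of $M_{2p}$: relative to a primitive $\TopPin^c$-structure it has the form appearing in \cite{[HKT]} with $B\Z/2$ replaced by $B\Z/2p$, twisted by the (surjective) orientation character $w_1\colon \Z/2p\to\Z/2$ and by the relevant Wu class, the finitely many possibilities being indexed by $w_1^4\in\{0,1\}$ and by the structure type ($\TopSpin$, $\TopPin^{\pm}$ or $\TopPin^c$), which is pinned down by $\Ks$ together with the Arf data. Next I would show that $M_{2p}$ and $X_{2p}$ are $B$-bordant. This is a computation in the twisted $\TopPin^c$-bordism group $\Omega^{\TopPin^c}_4(B\Z/2p; w_1)$: running the Atiyah--Hirzebruch spectral sequence and using that $p$ is odd, the reduced bordism of $B\Z/p$ contributes only odd-torsion classes, which are detected and killed by the $\eta$-invariant (equivalently by $\Ar$) via the formula of \cite[Theorem 4]{[HKT]} and \cite{[Gilkey],[BahriGilkey]}; this reduces the bordism question to the $\Z/2$ case, where $\Ar\in\Z/8$ is a complete invariant. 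Hence the assumed equality of $\{\chi, w_1^4, \Ks, \Ar\}$ produces a $B$-bordism $W$ from $M_{2p}$ to $X_{2p}$.

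Then, following Kreck, I would do surgery below the middle dimension on $W$ to make it $2$-connected relative to both ends, leaving a surgery obstruction $\theta(W)$ in the simple $l$-monoid $l_5^s(\Z[\Z/2p], w_1)$; its vanishing modulo the indeterminacy (the image of $L_5^s(\Z[\Z/2p], w_1)$ and the submonoid generated by hyperbolic planes) is equivalent to $W$ being surgerable to an $s$-cobordism, whence, since $\Z/2p$ is finite and therefore a good group, the topological $s$-cobordism theorem gives $M_{2p}\cong X_{2p}$; the simple decoration is essential because $\Wh(\Z/2p)$ need not vanish. The crux is to compute $l_5^s(\Z[\Z/2p], w_1)$ together with this indeterminacy and to match it with the $\Z/2$ answer of \cite{[HKT]}, so that the already-assumed identity $\Ar(M_{2p},\Phi)=\pm\Ar(X_{2p},\Phi')$ forces $\theta(W)$ to be elementary. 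Again exploiting that $p$ is odd, one expects that after localization the $\Z/p$-factor contributes nothing: the odd-degree $L$-groups of $\Z[\Z/p]$ vanish and the associated idempotent summand of the monoid should be trivial, so that $l_5^s(\Z[\Z/2p], w_1)\cong l_5^s(\Z[\Z/2], w_1)$. As an alternative, or a complement, I would try to re-apply a cancellation theorem in the spirit of Khan \cite{[Khan]} to the specific manifolds of the Collection, verifying its hypotheses for them after one preliminary stabilization.

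The main obstacle is precisely this last identification --- it is the ``well-known hard algebraic problem'' the paper flags. Unlike the ordinary surgery obstruction groups, $l_5^s$ is sensitive to the \emph{unstable} structure of quadratic forms over $\Z[\Z/2p]$, and the reduction from $\Z/2p$ to $\Z/2$ must be justified at the level of the monoid and of its action on stable-diffeomorphism classes within a fixed bordism class, not merely rationally or $2$-locally in $L$-theory. Controlling the contribution of the non-trivial idempotents arising from the $\Z/p$-factor --- equivalently, controlling the change-of-rings maps on $L^h$ versus $L^s$ and the Rothenberg-type sequences involving $\Wh(\Z/2p)$ and the Tate cohomology $\widehat H^{*}(\Z/2;\Wh(\Z/2p))$ --- is the genuine difficulty. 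If the identification $l_5^s(\Z[\Z/2p], w_1)\cong l_5^s(\Z[\Z/2], w_1)$ holds, the argument above closes and the conjecture follows; establishing it, or determining the correct (possibly larger) answer together with the corresponding enlargement of Collection \ref{Collection Manifolds}, is what remains to be done.
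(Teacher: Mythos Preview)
The statement you are attempting is labelled a \emph{Conjecture} in the paper and is not proven there; the paper explicitly flags the missing step as a ``well-known hard algebraic problem'' and only \emph{suggests} an approach, namely to analyse the stable equivalence classes governed by the obstruction monoid $l_5^s(\Z/2p,-)$ in the style of Hambleton--Kreck--Teichner. Your proposal is precisely a fleshed-out version of that suggested approach: you reduce to the cancellation of $S^2\times S^2$-summands (which is immediate from Theorem~\ref{Theorem Cancellation Homeomorphism Classes}), set up the normal $1$-type and the $B$-bordism via Kreck's method, and land on the obstruction in $l_5^s(\Z[\Z/2p],w_1)$. You then correctly identify that the entire argument hinges on the unproven identification $l_5^s(\Z[\Z/2p],w_1)\cong l_5^s(\Z[\Z/2],w_1)$ (together with its action on stable classes), and you yourself state that ``establishing it \ldots\ is what remains to be done.'' So your proposal is not a proof but a strategy that coincides with the one the paper points to, with the same acknowledged gap.

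Two minor cautions about the heuristic parts of your outline. First, the vanishing of odd $L$-groups of $\Z[\Z/p]$ is a statement about the stabilised $L$-theory and does not by itself control the unstable monoid $l_5^s$; the passage from $L$ to $l$ is exactly where the difficulty lies, so the sentence ``one expects that after localization the $\Z/p$-factor contributes nothing'' is the crux, not a lemma. Second, invoking Khan's cancellation a second time ``after one preliminary stabilization'' cannot improve on Theorem~\ref{Theorem Cancellation Homeomorphism Classes}: Khan's result already gives cancellation down to two copies of $S^2\times S^2$, and reapplying it does not get you below that threshold. In short, your write-up is an accurate and well-organised expansion of the paper's own hint, but it remains a programme rather than a proof, for the same reason the paper leaves the statement as a conjecture.
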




This short paper has the following organization. The proofs of the results presented in the introduction are given in Section \ref{Section Proofs}. Section \ref{Section Pin Structures} contains a summary of existence and uniqueness results of $\TopPin^\dagger$-structures on 4-manifolds. The behavior of the basic topological invariants under Construction \ref{Construction Main} is studied in Section \ref{Section Invariants}.

\subsection{Acknowledgments} We thank Daniel Kasprowski, Mark Powell and Peter Teichner for useful conversations. We thank MPI - Bonn of its hospitality while a portion of this work was being written.

\section{$\TopPin^{\dagger}$ and $\Pin^\dagger$-structures, and $w_2$-types}\label{Section Pin Structures}Our convention in this paper is to say that a 4-manifold admits a $\TopPin^\dagger$-structure if its topological tangent bundle admits a $\TopPin^\dagger$-structure for $\dagger = c, +, -$. If the underlying 4-manifold admits a smooth structure, our discussion centers around $\Pin^\dagger$-structures. The reader is directed to \cite{[KirbyTaylor]} and \cite[\S 2]{[HKT]} for background on these structures and to \cite{[FNOP]} for background on topological tangent and normal vector bundles. We first compare our convention with the one used in \cite{[HKT]}, where the authors equip the normal (vector) bundle with such structure.

\begin{lemma}Let $M$ be a topological 4-manifold. There is a 1-to-1 correspondence between $\TopPin^\pm$-structures on the topological tangent bundle of $M$ and $\TopPin^\mp$-structures on its normal vector bundle. 
\end{lemma}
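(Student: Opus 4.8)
The plan is to deduce the correspondence from one obstruction-theoretic observation: the two fibrations $B\TopPin^{+}\to B\Top$ and $B\TopPin^{-}\to B\Top$ are interchanged by the additive-inverse self-map of $B\Top$, while $\nu_M$ is the additive inverse of the stable tangent bundle of $M$.

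First I would assemble the two inputs. By the microbundle and normal-bundle theory recorded in \cite{[FNOP]}, a closed topological $4$-manifold $M$ has a stable normal vector bundle $\nu_M$ with $TM\oplus\nu_M$ stably trivial; hence $\nu_M$ represents the additive inverse of $[TM]$ in $[M,B\Top]$, its classifying map factors as $\xi_{\nu_M}=n\circ\xi_{TM}$ for the self-map $n\colon B\Top\to B\Top$ that inverts the universal stable topological bundle (available since $\Z\times B\Top$ is an infinite loop space), and $w_1(\nu_M)=w_1(M)$, $w_2(\nu_M)=w_2(M)+w_1(M)^2$; equivalently $n^{\ast}w_1=w_1$ and $n^{\ast}w_2=w_2+w_1^2$ for the universal Stiefel-Whitney classes. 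From \cite{[KirbyTaylor]} and \cite[\S 2]{[HKT]}, a $\TopPin^{+}$-structure on a topological vector bundle is a homotopy class of lift of its classifying map through $q^{+}\colon B\TopPin^{+}\to B\Top$, the homotopy fibre of the universal $w_2$, while a $\TopPin^{-}$-structure is a lift through $q^{-}\colon B\TopPin^{-}\to B\Top$, the homotopy fibre of the universal $w_2+w_1^2$; since these structures stabilise, one may work with stable bundles throughout.

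These combine immediately. Pulling back along $n$, the fibration $n^{\ast}q^{+}$ is the homotopy fibre of $w_2\circ n=w_2+w_1^2$, hence is canonically identified over $B\Top$ with $q^{-}$, and symmetrically $n^{\ast}q^{-}\simeq q^{+}$. Because a lift of $n\circ\xi_{TM}$ through $q^{-}$ is, by the universal property of the pullback, literally the same datum as a lift of $\xi_{TM}$ through $n^{\ast}q^{-}$, one obtains a natural bijection between $\TopPin^{-}$-structures on $\nu_M$ and $\TopPin^{+}$-structures on $TM$, compatible with the $H^1(M;\Z/2)$-torsor structures on both sets. Interchanging the roles of $+$ and $-$ throughout yields in the same way the bijection between $\TopPin^{-}$-structures on $TM$ and $\TopPin^{+}$-structures on $\nu_M$, which is the assertion of the lemma. (A more hands-on alternative would use the Kirby-Taylor identifications of $\TopPin^{\mp}$-structures on $E$ with $\mathrm{TopSpin}$-structures on $E\oplus\det E$, respectively on $E\oplus(\det E)^{\oplus 3}$, together with the additivity of $\mathrm{TopSpin}$-structures under Whitney sum applied to $TM\oplus\nu_M$; there one must take some care because $(\det E)^{\oplus 4}$ need not be stably trivial, its top Stiefel-Whitney class being $w_1(M)^4$, so the needed basepoint in the target has to be produced by hand.)

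The step I expect to require the most care is the second input: verifying, in the topological category, that $M$ carries a stable normal vector bundle with $TM\oplus\nu_M$ stably trivial and $[\nu_M]=-[TM]$, and that $\TopPin^{\pm}$-structures on a topological vector bundle are classified by lifts of the classifying map through the homotopy fibre of $w_2$ (respectively $w_2+w_1^2$). Both are standard, but they rest on the foundational results of Kirby-Siebenmann and on \cite{[FNOP], [KirbyTaylor]} rather than on anything established here; the rest is formal manipulation of fibrations together with the elementary identity $w_2(-E)=w_2(E)+w_1(E)^2$.
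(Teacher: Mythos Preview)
The paper states this lemma without proof, treating it as a standard background fact that reconciles the tangential convention adopted here with the normal-bundle convention of \cite{[HKT]}. Your argument supplies a clean justification: the key observation that the additive-inverse map $n$ on $B\Top$ pulls $w_2$ back to $w_2+w_1^2$ (since $w(-E)=w(E)^{-1}$ forces $w_2(-E)=w_2(E)+w_1(E)^2$) identifies $n^\ast q^{\pm}$ with $q^{\mp}$ over $B\Top$, after which the bijection of lifts is formal from the universal property of pullbacks. This is exactly the standard obstruction-theoretic reason the sign flips when passing from tangent to normal bundle, and it is correct; the compatibility with the $H^1(M;\Z/2)$-torsor structures is automatic from the description of both sides as lifts through a $K(\Z/2,1)$-fibration. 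Your parenthetical alternative via the Kirby--Taylor stabilisations $E\mapsto E\oplus\det E$ and $E\mapsto E\oplus(\det E)^{\oplus 3}$ is also workable, and you correctly isolate the only subtlety there: $(\det TM)^{\oplus 4}$ has $w_4=w_1(M)^4$ and so is not trivial in general, meaning one must fix a reference $\mathrm{TopSpin}$-structure on it by hand rather than appeal to a canonical trivialisation.
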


Existence and classification results of these structures are contained in the following lemma.

\begin{lemma}\label{Lemma Pin}\cite[Lemma 1]{[HKT]}. Let $M$ be a topological manifold.  

\begin{itemize}

\item There is a $\TopPin^c$-structure $(M, \Phi^c_M)$ if and only if $\beta(w_2(M)) = 0$, where\begin{equation*}\beta:H^2(M; \Z/2)\rightarrow H^3(M; \Z)\end{equation*}is the Bockstein operator induced from the exact coefficient sequence $\Z\rightarrow \Z\rightarrow \Z/2$. 

\item $\TopPin^c$-structures $(M, \Phi^c_M)$ are in one-to-one correspondence with the second cohomology group $H^2(M; \Z)$. 

\item There is a $\TopPin^-$-structure $(M, \Phi^-_M)$ if and only if\begin{equation*}w_2(M) = w_1(M)^2.\end{equation*} 

\item There is a $\TopPin^+$-structure $(M, \Phi^+_M)$ if and only if $w_2(M) = 0$. 

\item $\TopPin^\pm$-structures $(M, \Phi^{\pm}_M)$ are in a one-to-one correspondence with the first cohomology grop $H^1(M; \Z/2)$.
\end{itemize}
\end{lemma}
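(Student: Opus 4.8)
The plan is to derive all five assertions uniformly from obstruction theory applied to the principal fibrations that define $\TopPin^\dagger$-structures, following \cite[Lemma 1]{[HKT]}, whose argument transfers to the topological category without change. The statement is in fact precisely \cite[Lemma 1]{[HKT]}, so there is nothing essentially new to do; below is how I would set it up.

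First I would recall the relevant classifying spaces. Stable topological microbundles carry Stiefel--Whitney classes $w_1, w_2 \in H^*(BTOP;\Z/2)$ and the integral class $\beta(w_2) \in H^3(BTOP;\Z)$, and one defines $BTopPin^+$, $BTopPin^-$ and $BTopPin^c$ as the homotopy fibres of the maps $w_2, \ w_2 + w_1^2 \colon BTOP \to K(\Z/2,2)$ and $\beta(w_2) \colon BTOP \to K(\Z,3)$, respectively. A $\TopPin^\dagger$-structure on $M$ is then, by definition, a homotopy class of lift of the classifying map $\tau_M \colon M \to BTOP$ of the topological tangent microbundle through $BTopPin^\dagger \to BTOP$.

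Each of these maps is a pullback of the path fibration over an Eilenberg--MacLane space $K(\pi,n)$, with $(\pi,n) = (\Z/2,2)$ in the $\pm$ cases and $(\pi,n) = (\Z,3)$ in the $c$ case, and two standard facts about such principal fibrations give everything. A lift of $\tau_M$ exists if and only if the composite of $\tau_M$ with the classifying map $BTOP \to K(\pi,n)$ is null-homotopic, i.e. if and only if the corresponding characteristic class of $M$ vanishes: $w_2(M) = 0$ for $\TopPin^+$, $w_2(M) + w_1(M)^2 = 0$, equivalently $w_2(M) = w_1(M)^2$, for $\TopPin^-$, and $\beta(w_2(M)) = 0$ for $\TopPin^c$. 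When a lift exists, the set of homotopy classes of lifts is a torsor over $[M, K(\pi,n-1)] = H^{n-1}(M;\pi)$, which is $H^1(M;\Z/2)$ in the $\TopPin^\pm$ cases and $H^2(M;\Z)$ in the $\TopPin^c$ case. The single-stage nature of these fibrations is exactly why there are no secondary obstructions and why no hypothesis on $\dim M$ is needed.

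The hard part, such as it is, is purely bookkeeping: one must know that $w_1$, $w_2$ and $\beta(w_2)$ are defined for topological microbundles and that $BTopPin^\dagger \to BTOP$ agrees in low degrees with the smooth analogue $BPin^\dagger \to BO$, so that the proof of \cite[Lemma 1]{[HKT]} applies verbatim; for this I would cite \cite{[KirbyTaylor]} and \cite{[FNOP]}. I do not expect any genuine mathematical obstacle.
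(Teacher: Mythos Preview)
Your proposal is correct, and there is nothing to compare: the paper does not give its own proof of this lemma but simply records it as \cite[Lemma 1]{[HKT]}, exactly as you observe at the outset. The obstruction-theoretic sketch you supply is the standard argument and is consistent with the references the paper cites.
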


These structures provide the following trichotomy of 4-manifolds. 

\begin{definition}A closed topological nonorientable 4-manifold is of\begin{itemize}
\item $w_2$-type (I) if it does not admit any $\TopPin^{\pm}$-structure;
\item $w_2$-type (II) if it admits a $\TopPin^-$-structure, and 
\item $w_2$-type (III) if it admits a $\TopPin^+$-structure.\end{itemize} 
\end{definition}

Lemma \ref{Lemma Pin} and the exact sequence\begin{equation*}0\rightarrow H^2(\pi_1(M); \Z/2)\rightarrow H^2(M; \Z/2)\rightarrow H^2(\widehat{M}; \Z/2)\rightarrow 0\end{equation*}implies that $M$ admits a $\TopPin^\pm$-structure if the second Stiefel-Whitney class of its orientation 2-cover $\widehat{M}$ satisfies $w_2(\widehat{M}) = 0$. Moreover, if the latter identity holds, then either $w_2(M) = w_1(M)^2$ or $w_2(M) = 0$. This implies the following lemma (cf. \cite[p. 653]{[HKT]}).  

\begin{lemma}\label{Lemma Pin Trichotomy}cf. \cite[Lemma 2]{[HKT]}. Let $M$ be a closed topological nonorientable 4-manifold with fundamental group $\Z/2p$ for odd $p \geq 1$.\begin{itemize}
\item Any such 4-manifold admits a $\TopPin^c$-structure. 
    \item The 4-manifold $M$ has $w_2$-type (I) if and only if $w_2(\widehat{M})\neq 0$. 
    \item The 4-manifold $M$ has $w_2$-type (II) if and only if $w_1(M)^2 = w_2(M)$.
    \item The 4-manifold $M$ has $w_2$-type (III) if and only if $w_2(M) = 0$. 
    \end{itemize}
\end{lemma}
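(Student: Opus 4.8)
The last three bullet points unwind Lemma~\ref{Lemma Pin} together with the displayed exact sequence, whereas the first bullet point — existence of a $\TopPin^c$-structure — rests on a single cohomological computation, which I expect to be the only non-formal step; I would organize the proof accordingly.

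For the first bullet point, by Lemma~\ref{Lemma Pin} it suffices to show the obstruction $\beta(w_2(M))\in H^3(M;\Z)$ vanishes, $\beta$ being the integral Bockstein of $\Z\to\Z\to\Z/2$. Since $2\cdot\beta(w_2(M))=0$, the plan is to show that $H^3(M;\Z)$ has no $2$-torsion. By the universal coefficient theorem its torsion subgroup is $\Ext(H_2(M;\Z),\Z)\cong\mathrm{tors}\,H_2(M;\Z)$, and by twisted Poincaré duality for the closed nonorientable $4$-manifold $M$ one has $H_2(M;\Z)\cong H^2(M;\Z^w)$ for the orientation local system $\Z^w$. I would compute $H^2(M;\Z^w)$ with the Cartan--Leray spectral sequence $H^p(\Z/2;H^q(\widehat M;\pi^{*}\Z^w))\Rightarrow H^{p+q}(M;\Z^w)$ of the orientation double cover $\pi\colon\widehat M\to M$, whose deck group $\Z/2$ acts orientation-reversingly; here $\widehat M$ is a closed oriented $4$-manifold with $\pi_1(\widehat M)=\Z/p$ of odd order, so $H^{*}(\widehat M;\Z)$ has no $2$-torsion and $H^1(\widehat M;\Z)=0$. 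The three $E_2$-terms with $p+q=2$ are $H^2(\Z/2;\Z^-)=0$ (as $H^{2i}(\Z/2;\Z^-)=0$), $H^1(\Z/2;H^1(\widehat M;\Z))=0$, and $H^0(\Z/2;H^2(\widehat M;\Z))$ with a twisted action, a subgroup of the $2$-torsion-free group $H^2(\widehat M;\Z)$. As an extension of $2$-torsion-free groups is $2$-torsion-free, $H^2(M;\Z^w)$ and hence $H^3(M;\Z)$ has no $2$-torsion, so $\beta(w_2(M))=0$ and $M$ admits a $\TopPin^c$-structure. For $p=1$ this is \cite[Lemma~2]{[HKT]}; the point needing care when $p>1$ is that the argument uses only the absence of $2$-torsion in $H^{*}(\widehat M;\Z)$, not the simple-connectivity of $\widehat M$ available for $p=1$.

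For the remaining bullet points I would first note, for consistency, that $w_1(M)^2\neq 0$: it is the image of the nonzero class of $H^2(\pi_1(M);\Z/2)=\Z/2$ (using that $p$ is odd) under the injective first map of the displayed exact sequence, so the kernel of restriction $H^2(M;\Z/2)\to H^2(\widehat M;\Z/2)$ is exactly $\{0,w_1(M)^2\}$ and no $4$-manifold has $w_2$-types (II) and (III) at once. The fourth bullet point is then immediate: $w_2$-type (III) means by definition that $M$ admits a $\TopPin^+$-structure, equivalent by Lemma~\ref{Lemma Pin} to $w_2(M)=0$. The third bullet point is likewise immediate: $w_2$-type (II) means $M$ admits a $\TopPin^-$-structure, equivalent to $w_2(M)=w_1(M)^2$. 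For the second bullet point, $w_2$-type (I) means $w_2(M)\notin\{0,w_1(M)^2\}$; if $w_2(\widehat M)\neq 0$ this holds, because $\pi^{*}w_2(M)=w_2(\widehat M)$ while $\pi^{*}(0)=0$ and $\pi^{*}(w_1(M)^2)=w_1(\widehat M)^2=0$ (as $\widehat M$ is orientable), so $M$ has $w_2$-type (I); conversely, if $w_2(\widehat M)=0$ then $w_2(M)$ lies in $\{0,w_1(M)^2\}$ by the displayed exact sequence, so $M$ has $w_2$-type (II) or (III). This settles the second bullet point and the lemma.
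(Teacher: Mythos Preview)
Your proof is correct and, for the last three bullet points, follows exactly the argument the paper sketches in the paragraph preceding the lemma (the exact sequence $0\to H^2(\pi_1(M);\Z/2)\to H^2(M;\Z/2)\to H^2(\widehat M;\Z/2)\to 0$ together with Lemma~\ref{Lemma Pin}); your added remark that $w_1(M)^2\neq 0$ makes the trichotomy cleanly disjoint, which the paper leaves implicit.

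For the first bullet point the paper gives no argument at all beyond the citation ``cf.\ \cite[Lemma~2]{[HKT]}'', whereas you supply a genuine proof via the Cartan--Leray spectral sequence of the orientation double cover to show $H^3(M;\Z)$ has no $2$-torsion. This is a legitimate and self-contained route; it buys independence from \cite{[HKT]} at the cost of a short spectral-sequence computation, and it makes transparent that the only input needed from $\widehat M$ is the absence of $2$-torsion in its integral cohomology (guaranteed since $\pi_1(\widehat M)=\Z/p$ with $p$ odd), not the simple-connectivity available in the $p=1$ case treated in \cite{[HKT]}.
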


Kirby-Taylor computed the fourth $\TopPin^\pm$-bordism groups in \cite[\S 9]{[KirbyTaylor]} and the fourth $\TopPin^c$-bordism group was computed by Hambleton-Kreck-Teichner \cite{[HKT]}. As already presented in \cite[p. 654]{[HKT]}, the following table summarizes the values of these groups along with their invariants and generators.
\begin{center}
\begin{tabular}{|c | c | c | c|} 
 \hline
 {$\dagger$} & {$\Omega^{\TopPin^\dagger}_4$} & Invariants & Generators \\ [0.5ex] 
 \hline\hline
 c & $\Z/2\oplus \Z/8\oplus \Z/2$ & $(\Ks, \Ar, w_4)$ & $E_8, \RP^4, \CP^2$ \\ 
 \hline
 + & $\Z/2\oplus \Z/8$ & $(\Ks, \Ar)$ & $E_8, \RP^4$ \\
 \hline
 - & $\Z/2$ & $\Ks$ & $E_8$ \\ 
 \hline
\end{tabular}
\end{center}

Kirby-Taylor computed the fourth $\Pin^\pm$-bordism groups in \cite[\S 5]{[KirbyTaylor]}. Stolz showed that the $\eta$-invariant mod $2\Z$ is a complete $\Pin^+$-bordism invariant in \cite{[Stolz]}. Hambleton-Su computed the fourth $\Pin^c$-bordism group in \cite[p. 154]{[HS]} cf. \cite[Proof of Theorem 4]{[HKT]}. The next table summarizes these results.
\begin{center}
\begin{tabular}{|c | c | c | c|} 
 \hline
 {$\dagger$} & {$\Omega^{\Pin^\dagger}_4$} & Invariants & Generators \\ [0.5ex] 
 \hline\hline
 c & $\Z/8\oplus \Z/2$ & $(\Ar, w_4)$ & $\RP^4, \CP^2$ \\ 
 \hline
 + & $\Z/16$ & $\eta$ & $\RP^4$ \\
 \hline
 - & 0 & - & - \\
 \hline
\end{tabular}
\end{center}

The following definition is given in terms of $\Pin^c$-structures in \cite[\S 1]{[HKT]}.

\begin{definition}\label{Definition TopPin Primitive}A $\TopPin^c$-structure $(M, \Phi_M)$ is said to be primitive if either it comes from a $\TopPin^\pm$-structure or if the first Chern class $c_{\Phi_M}$ of the induced topological line bundle is a primitive cohomology class.
\end{definition}

The $\TopPin^c$-structure $(M, \Phi_M)$ is needed for the $\Ar$ invariant and, when $M$ has a smooth structure, a primitive $\Pin^c$-structure for the $\eta$-invariant. The reader might have noticed that there are instances where the notation used in \cite{[HKT]} for these invariants is actually $\Ar(X, c_{\Phi_M})$ and $\eta(M, c_{\Phi_M})$.

\section{Invariants of the 4-manifolds of Construction \ref{Construction Main}.}\label{Section Invariants}

This section is devoted to the computation of the invariants\begin{equation}\label{Invariants S2}\{\chi, w_1^4, \Ks, \Ar, \eta\}\end{equation} of the 4-manifolds obtained via Construction \ref{Construction Main} in terms of those of the initial data. Throughout this section, we assume that $X$ is a closed topological nonorientable 4-manifold with $\pi_1(X) = \Z/2$ and $X_{2p}$ is the associated 4-manifold constructed in (\ref{Nonorientable Construcion 2p}).

\begin{lemma}\label{Lemma Identities}The following identities hold.\begin{enumerate}

\item $\chi(X) = \chi(X_{2p})$.

\item $w_1(X)^4 = w_1(X_{2p})^4$.

\item $\Ks(X) = \Ks(X_{2p})$.

\item For a given primitive $\TopPin^c$-structure $(X, \Phi_{X})$, there are primitive $\TopPin^c$-structures $(X_{2p}, \Phi_{X_{2p}})$ and $(S^3\widetilde{\times} S^1, \Phi_{S^3\widetilde{\times} S^1})$ as well as a $\TopPin^c$-bordism\begin{equation}\label{Primitive Cobordism}((V, \Phi_V);, (X_{2p}, \Phi_{X_{2p}}), (X, \Phi_X)\sqcup (S^3\widetilde{\times} S^1, \Phi_{S^3\widetilde{\times} S^1})).\end{equation}

\item The $w_2$-types of $X$ and $X_{2p}$ coincide. 
\end{enumerate}
\end{lemma}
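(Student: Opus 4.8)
The plan is to prove each of the five identities by analyzing how Construction \ref{Construction Main} modifies $X$, namely by removing a tubular neighborhood $\nu(\alpha)$ of a generator of $\pi_1(X) = \Z/2$ and gluing in $N_p = (S^3 \simtimes S^1)\setminus\nu(\alpha^p)$ along the common boundary $S^2 \simtimes S^1$. The key structural observation, which I would record first, is that $N_p$ is a homology $S^1$ with $\Z/2$-coefficients concentrated in the same degrees as $D^3 \simtimes S^1 = \nu(\alpha)$; more precisely, $N_p$ deformation retracts onto a $2$-complex and $H_*(N_p; \Z)$, $H_*(\nu(\alpha); \Z)$ and $H_*(S^2\simtimes S^1;\Z)$ fit together so that the Mayer--Vietoris sequence for $X_{2p} = (X\setminus\nu(\alpha))\cup N_p$ computes the same Betti numbers as for $X = (X\setminus\nu(\alpha))\cup\nu(\alpha)$. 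This is exactly the kind of bookkeeping already implicit in \cite{[BaisTorres]}, so I would cite that for the homology of $N_p$ and the fact that the gluing homeomorphism is immaterial (\cite[Theorem A]{[BaisTorres]}).

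For item (1), $\chi$ is additive under gluing along a boundary, so $\chi(X_{2p}) = \chi(X\setminus\nu(\alpha)) + \chi(N_p) - \chi(S^2\simtimes S^1)$ and likewise $\chi(X) = \chi(X\setminus\nu(\alpha)) + \chi(\nu(\alpha)) - \chi(S^2\simtimes S^1)$; since $\chi(\nu(\alpha)) = \chi(S^1\times D^3) = 0$ and $\chi(N_p) = \chi(S^3\simtimes S^1) - \chi(D^3\simtimes S^1) = 0 - 0 = 0$, the two agree. For item (2), $w_1^4$ is a $\Z/2$ characteristic number; I would argue that the orientation character $w_1: \pi_1(X_{2p}) = \Z/2p \to \Z/2$ factors through $\pi_1(X) = \Z/2$ in a way that identifies the fundamental class mod $2$ pairings, so $\langle w_1(X_{2p})^4, [X_{2p}]_{\Z/2}\rangle = \langle w_1(X)^4, [X]_{\Z/2}\rangle$. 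Concretely, since $p$ is odd, $H^1(X_{2p};\Z/2) = \Z/2$ is generated by a class pulled back from $H^1(X;\Z/2)$ along a degree-one (mod $2$) map $X_{2p}\to X$ collapsing $N_p$, and naturality of cup products plus the fact that this map has mod-$2$ degree one gives the claim; alternatively one checks directly that $w_1(X_{2p})^4 = 0$ iff $w_1(X)^4 = 0$ using the list of $\TopPin$-types, which is anyway forced by item (5).

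Items (3), (4), (5) are where the $\TopPin^c$/bordism machinery enters, and item (4) is the main obstacle. For (3): $\Ks$ is additive under gluing and vanishes on $N_p$ and on $\nu(\alpha)$ (both are $S^1$-bundles over a disk, hence smoothable), and the closed-up pieces differ by a closed smoothable summand, so $\Ks(X_{2p}) = \Ks(X)$; I would phrase this via the fact that $X_{2p}$ and $X\cs(S^3\simtimes S^1)$ — which is just $X$ after a trivial surgery — have the same $\Ks$, or cite that $N_p$ carries a topological handle structure with no exotic obstruction. For (4), the strategy is to build the $\TopPin^c$-bordism $V$ explicitly as the trace of the cut-and-paste: one takes $(X\setminus\nu(\alpha))\times I$ and attaches along part of the boundary a bordism from $\partial\nu(\alpha)$-capped-off ($=X$'s piece) on one end to $N_p$-capped-off on the other, realizing $X$ and $S^3\simtimes S^1$ on one boundary component and $X_{2p}$ on the other. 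One must then check that a chosen primitive $\TopPin^c$-structure $\Phi_X$ extends over $V$: this is where I expect the real work, because $\TopPin^c$-structures are classified by $H^2$ (Lemma \ref{Lemma Pin}), and one has to verify that the relevant $w_2$/Bockstein obstruction vanishes on $V$ and that the first Chern class can be chosen to restrict to a primitive class on $X_{2p}$. I would do this by first establishing item (5) — that $X$ and $X_{2p}$ have the same $w_2$-type, via Lemma \ref{Lemma Pin Trichotomy}, by comparing $w_2$ of the orientation double covers $\widehat{X}$ and $\widehat{X_{2p}}$ (the double cover of $N_p$ is an orientable homology $S^1\times S^3$ piece contributing no $w_2$, so $w_2(\widehat{X_{2p}})\neq 0 \iff w_2(\widehat{X})\neq 0$) — and then using that the $\TopPin^c$-structure exists on all of $X_{2p}$ (first bullet of Lemma \ref{Lemma Pin Trichotomy}) together with a Mayer--Vietoris comparison of $H^2$ to propagate a primitive choice through the bordism. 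The subtlety is the primitivity clause in Definition \ref{Definition TopPin Primitive}: I would show $H^2(X_{2p};\Z)\to H^2(X;\Z)$ (or rather the comparison through $V$) is compatible with the primitivity condition, which reduces to the fact that $H^2$ of the glued-in piece $N_p$ is torsion of odd order (since $\pi_1$ contributes $\Z/2p$ and $N_p$ adds the odd $\Z/p$ part), so a class is primitive downstairs iff its extension is primitive upstairs.
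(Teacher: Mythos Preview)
Your overall strategy matches the paper's: items (1) and (3) are handled by additivity under gluing exactly as you do (the paper cites \cite[Theorem 9.2(5)]{[FNOP]} for (3)); item (4) is the trace-of-surgery bordism with $\TopPin$-structures propagated through the pieces; and item (5) is read off from Lemma~\ref{Lemma Pin Trichotomy}. One organizational difference: the paper restricts a given $\TopPin^{\pm}$ (or $\TopPin^c$)-structure on $X$ to $X\setminus\nu(\alpha)$, chooses a compatible one on $S^3\simtimes S^1$ restricting to $N_p$, and simply glues---no obstruction-theoretic bookkeeping in $H^2$ is needed, and primitivity is inherited from the canonical primitive structures on the initial data listed in \cite[Theorem~3, \S3]{[HKT]}. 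The paper then deduces (5) from (4), rather than the reverse.

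There is, however, a genuine gap in your treatment of item (2). The degree-one map $X_{2p}\to X$ ``collapsing $N_p$'' does not exist: it would require a map $N_p\to\nu(\alpha)\simeq S^1$ restricting to the identity on $\partial N_p=S^2\simtimes S^1$, but the restriction $H^1(N_p;\Z)\to H^1(\partial N_p;\Z)$ is multiplication by $p$ (dually, the boundary circle represents $p$ times the generator of $\pi_1(N_p)=\Z$), so the generator of $H^1(\partial N_p;\Z)$ is not hit. Your fallback, that $w_1^4$ is ``forced by item (5)'', is also wrong: the $w_2$-type does not determine $w_1^4$ (for instance $\RP^4$ and $\cs_{S^1}2\cdot\RP^4$ are both $w_2$-type (III) but have $w_1^4=1$ and $w_1^4=0$ respectively). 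The correct argument is the one you already have the ingredients for: $w_1^4$ is a Stiefel--Whitney number, hence an unoriented bordism invariant, and the bordism of item (4) gives $w_1(X_{2p})^4=w_1(X)^4+w_1(S^3\simtimes S^1)^4$; the last term vanishes because $H^2(S^3\simtimes S^1;\Z/2)=0$, so $w_1^2=0$ there.
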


\begin{proof}The claims in the first two items of Lemma \ref{Lemma Identities} are straight-forward and the details are left to the reader. Regarding the Kirby-Siebenmann invariant, there are two ways to argue. One way is to use \cite[Theorem 9.2 (5)]{[FNOP]}, which says\begin{equation}\Ks(X_{2p}) = \Ks(X\setminus \nu(\alpha)) + \Ks(N_p) = \Ks (X)\end{equation}since $\Ks(X\setminus \nu(\alpha)) = \Ks(X)$ and $\Ks(N_p) = \Ks(S^3\widetilde{\times} S^1) = 0$. The second one is to use results of Hsu \cite{[Hsu]} and Lashof-Taylor \cite{[LT]}, where the authors have shown that the Kirby-Siebenmann invariant is a bordism invariant cf. \cite{[KirbySiebenmann]}.

To address the claims of the fourth item, we assemble a primitive $\TopPin^c$-structure $(X_{2p}, \Phi_{X_{2p}})$ by equipping each of the building blocks in the decomposition $X_{2p} = (X\setminus \nu(\alpha))\cup N_p$ with matching $\TopPin^c$-structures. A prototype argument is to consider the case where the primitive $\TopPin^c$-structure arises from a $\TopPin^\pm$-structure; see Definition \ref{Definition TopPin Primitive}. A given $\TopPin^\pm$-structure $(X, \Phi_X)$ induces $\TopPin^\pm$-structures on the compact nonorientable 4-manifold $X\setminus \nu(\alpha_X)$ and on its boundary $\partial(X\setminus \nu(\alpha_X)) = S^2\widetilde{\times} S^1$ \cite[Construction 1.5]{[KirbyTaylor]}. Similarly, a $\TopPin^\pm$-structure $(S^3\widetilde{\times} S^1, \Phi_{S^3\widetilde{\times} S^1})$ induces a $\TopPin^\pm$-structure on the compact 4-manifold $(N_p, \Phi_{N_p})$ that restricts to a $\TopPin^\pm$-structure $(\partial N_p, \Phi_{N_p}|_\partial)$ on its boundary. We can glue the induced structures on $\partial N_p$ and $\partial (X\setminus \nu(\alpha))$ to produce a $\TopPin^\pm$-structure $(X_{2p}, \Phi_{X_{2p}})$.  The corresponding bordism (\ref{Primitive Cobordism}) is readily assembled by gluing $N_p$ to the top lid $X\times \{1\}$ of the product $X\times [0, 1]$ with matching $\TopPin^\pm$-structures. A similar argument establishes the $\TopPin^c$-case even in the abscence of a $\TopPin^\pm$-structure. Notice that the work of Hambleton-Kreck-Teichner lists canonical primitive $\TopPin^c$-structures for the initial data $X$ to be used in the assemblage of $X_{2p}$ in the $w_2$-type (I); see \cite[Theorem 3, \S 3]{[HKT]}. 

The fifth item follows from Lemma \ref{Lemma Pin Trichotomy} and the fourth item.\end{proof}

The $\TopPin^c$-bordism of Lemma \ref{Lemma Identities} and the property of the Arf invariant and the $\eta$-invariant being $\TopPin^c$- and $\Pin^c$-bordism invariants (respectively) \cite{[BahriGilkey], [Gilkey], [HKT]} have the following implication. 

\begin{proposition}\label{Proposition Value Pin}For every primitive $\TopPin^c$-structure $(X, \Phi)$, there is a primitive $\TopPin^c$-structure $(X_{2p}, \Phi_{2p})$ such that\begin{equation}\Ar(X, \Phi) = \pm \Ar (X_{2p}, \Phi_{2p}).\end{equation}

For every primitive $\Pin^c$-structure $(X, \Phi)$, there is a primitive $\Pin^c$-structure $(X_{2p}, \Phi_{2p})$ such that\begin{equation}\eta(X, \Phi) = \pm \eta (X_{2p}, \Phi_{2p}).\end{equation} 
\end{proposition}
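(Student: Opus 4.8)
The plan is to read the statement directly off the $\TopPin^c$-bordism produced in Lemma \ref{Lemma Identities}(4). That lemma gives, for a chosen primitive $\TopPin^c$-structure $(X,\Phi_X)$, a primitive $\TopPin^c$-structure $(X_{2p},\Phi_{X_{2p}})$ and a $\TopPin^c$-bordism $((V,\Phi_V); (X_{2p},\Phi_{X_{2p}}), (X,\Phi_X)\sqcup(S^3\widetilde\times S^1, \Phi_{S^3\widetilde\times S^1}))$. Hence in $\Omega^{\TopPin^c}_3$ (or, more precisely, reading the bordism as an equality of classes of closed $3$-manifolds-with-structure after capping, but it is cleaner to keep it at the level of the $4$-dimensional bordism and invoke bordism invariance of $\Ar$) one gets $\Ar(X_{2p},\Phi_{X_{2p}}) = \Ar(X,\Phi_X) + \Ar(S^3\widetilde\times S^1, \Phi_{S^3\widetilde\times S^1})$, using that $\Ar$ is a $\TopPin^c$-bordism invariant and additive under disjoint union \cite{[HKT]}. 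So the first step is to compute $\Ar(S^3\widetilde\times S^1, \Phi_{S^3\widetilde\times S^1})$ and show it vanishes.

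For that vanishing I would argue as follows. The manifold $S^3\widetilde\times S^1$ fibers over $S^1$ with fiber $S^3$; it bounds. Concretely, $N_p = (S^3\widetilde\times S^1)\setminus\nu(\alpha^p)$ is a compact $\TopPin^c$-manifold with $\partial N_p = S^2\widetilde\times S^1$, and $D^3\widetilde\times S^1 = \nu(\alpha^p)$ is likewise compact with the same boundary; but more to the point, $S^3\widetilde\times S^1$ itself is the boundary of the associated $D^4$-bundle $D^4\widetilde\times S^1$ over $S^1$, which carries a compatible $\TopPin^c$-structure restricting to the chosen one on the total space of the sphere bundle (the obstruction $\beta(w_2)$ vanishes since $w_2(D^4\widetilde\times S^1)=0$, and one extends the line bundle over the disk bundle by a homotopy argument as in Lemma \ref{Lemma Identities}(4)). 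Since $\Ar$ is a $\TopPin^c$-bordism invariant and $S^3\widetilde\times S^1$ bounds with its structure, $\Ar(S^3\widetilde\times S^1, \Phi_{S^3\widetilde\times S^1}) = 0$; alternatively one notes $\chi(S^3\widetilde\times S^1)=0$ and checks its class in $\Omega^{\TopPin^c}_4$ is trivial against the invariants $(\Ks, \Ar, w_4)$ in the table, all of which vanish for a $\widetilde\times S^1$-bundle. Combining with the displayed bordism equation yields $\Ar(X_{2p},\Phi_{X_{2p}}) = \Ar(X,\Phi_X)$, and a fortiori $\Ar(X,\Phi) = \pm\Ar(X_{2p},\Phi_{2p})$; the sign ambiguity is harmless and anyway reflects the dependence on the choice of orientation/primitive structure already built into the statement of Theorem \ref{Theorem Homeomorphism Invariants}.

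The $\eta$-invariant case is identical in structure: when $X$ (hence $X_{2p}$, by Lemma \ref{Lemma Identities}) admits a smooth structure, the same cut-and-paste produces a $\Pin^c$-bordism between $(X_{2p},\Phi_{X_{2p}})$ and $(X,\Phi_X)\sqcup(S^3\widetilde\times S^1,\Phi_{S^3\widetilde\times S^1})$ — one just runs Lemma \ref{Lemma Identities}(4) in the smooth category, which is legitimate since all the pieces $S^3\widetilde\times S^1$, $D^3\widetilde\times S^1$, $S^2\widetilde\times S^1$ are smooth and the gluing homeomorphism of \cite[Theorem A]{[BaisTorres]} can be taken smooth on these standard pieces. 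Then, invoking that $\eta \bmod 2\Z$ (or the full $\Z/16$- resp. $\Z/8$-valued refinement, as in the $\Pin^c$ table) is a $\Pin^c$-bordism invariant \cite{[Gilkey], [BahriGilkey]} and additive under disjoint union, together with $\eta(S^3\widetilde\times S^1,\Phi_{S^3\widetilde\times S^1})=0$ (again because that manifold bounds $D^4\widetilde\times S^1$ as a smooth $\Pin^c$-manifold), gives $\eta(X,\Phi)=\pm\eta(X_{2p},\Phi_{2p})$.

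The main obstacle, and the only genuinely non-formal point, is verifying $\Ar(S^3\widetilde\times S^1,\Phi_{S^3\widetilde\times S^1})=0$ (and its $\eta$-analogue) for \emph{the specific primitive $\TopPin^c$-structure} that arises in the assemblage of $\Phi_{X_{2p}}$, rather than for some structure. One must ensure the structure induced on $N_p$ from a chosen structure on $S^3\widetilde\times S^1$ is compatible, across $\partial N_p = S^2\widetilde\times S^1$, with the structure induced on $X\setminus\nu(\alpha)$ from $\Phi_X$ — this is exactly the matching already carried out in the proof of Lemma \ref{Lemma Identities}(4), so one appeals to that construction and then only needs that the fixed structure on $S^3\widetilde\times S^1$ extends over $D^4\widetilde\times S^1$. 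Since $H^2(D^4\widetilde\times S^1;\Z)\to H^2(S^3\widetilde\times S^1;\Z)$ is an isomorphism (both vanish, as $H^2$ of a $D^4$- or $S^3$-bundle over $S^1$ is zero), the $\TopPin^c$-structure is unique up to the action of $H^2$ and extends uniquely, so no choice obstruction survives; this closes the argument. I would also remark that once $\Ar(X_{2p},\Phi_{X_{2p}})=\Ar(X,\Phi_X)$ is known for one compatible pair, the $\pm$ in the statement simply records passing to the other primitive structures on $X_{2p}$ obtained by the two possible identifications, matching the formulation of Theorem \ref{Theorem Homeomorphism Invariants}.
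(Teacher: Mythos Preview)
Your approach is essentially the paper's: invoke the $\TopPin^c$-bordism of Lemma \ref{Lemma Identities}(4) together with the bordism invariance (and additivity) of $\Ar$ and $\eta$. The paper's proof is in fact just the one-sentence lead-in to the proposition, leaving implicit the point you make explicit, namely that $(S^3\widetilde\times S^1,\Phi_{S^3\widetilde\times S^1})$ contributes nothing because it bounds $D^4\widetilde\times S^1$ as a $\TopPin^c$- (resp.\ $\Pin^c$-) manifold; your verification that $H^2(S^3\widetilde\times S^1;\Z)=0$ so the structure extends is a welcome addition. The only wrinkle is your parenthetical about $\Omega^{\TopPin^c}_3$, which is a slip: the bordism $V$ of Lemma \ref{Lemma Identities}(4) is a $5$-manifold and the relevant relation sits in $\Omega^{\TopPin^c}_4$; you immediately self-correct, so this does not affect the argument.
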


A topological formula for the $\eta$-invariant in terms of the $\Ar$ invariant is found in \cite[\S 5]{[HKT]}.

\section{Proofs}\label{Section Proofs}

\subsection{Proof of Theorem \ref{Theorem Homeomorphism Invariants}}We need to show that the statements\begin{enumerate}[(i)]
\item There is a homeomorphism $X_{2p}\rightarrow X'_{2p}$. 
\item There is a homeomorphism $X\rightarrow X'$.
\end{enumerate}are equivalent. We begin with the implication (ii)$\Rightarrow$(i). A homeomorphism $X\rightarrow X'$ induces a homeomorphism $X\setminus \nu(\alpha_X)\rightarrow X'\setminus \nu(\alpha_{X'})$ that restricts to a homeomorphism $\partial(X\setminus \nu(\alpha_X))\rightarrow \partial(X'\setminus \nu(\alpha_{X'}))$ on the $S^2\widetilde{\times} S^1$ boundary for any odd $p > 1$. We have shown in joint work with Bais that any such homeomorphism extends to the compact 4-manifold $N_p$ used in Construction \ref{Construction Main} and obtained the following result\begin{theorem}\label{Theorem Homeomorphism}\cite[Theorem B]{[BaisTorres]}. Let $M_1$ and $M_2$ be topological 4-manifolds and let\begin{equation*}\{\alpha_{M_i}\subset M_i: i = 1, 2\}\end{equation*} be locally flat orientation-reversing simple loops. Suppose there is a homeomorphism\begin{equation*}
M_1\setminus \nu(\alpha_{M_1})\rightarrow M_2\setminus \nu(\alpha_{M_2})\end{equation*}and define\begin{equation*}M_i(p) \mathrel{\mathop:}=(M_i\setminus \nu(\alpha_{M_i}))\cup N_p.\end{equation*} for $i = 1, 2$, where $N_p$ is the compact 4-manifold that was defined in (\ref{Nonorientable Piece}). The 4-manifolds $M_1(p)$ and $M_2(p)$ are homeomorphic.
\end{theorem}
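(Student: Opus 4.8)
The plan is to reduce the statement to the extension property that was already invoked in Construction \ref{Construction Main}, namely \cite[Theorem A]{[BaisTorres]}: every self-homeomorphism of $S^2\widetilde{\times} S^1$ extends to a self-homeomorphism of the compact $4$-manifold $N_p$ of (\ref{Nonorientable Piece}). Granting this, a homeomorphism $M_1(p)\rightarrow M_2(p)$ is assembled by gluing the given homeomorphism of complements to a suitably chosen self-homeomorphism of $N_p$.

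First I would fix the identifications. Since $\alpha_{M_i}$ is a locally flat orientation-reversing simple loop, the locally flat tubular neighborhood theorem in dimension four (see \cite{[FNOP]}) identifies $\nu(\alpha_{M_i})$ with the total space of the nonorientable $D^3$-bundle over $S^1$, so that $\partial(M_i\setminus\nu(\alpha_{M_i}))$ is homeomorphic to $S^2\widetilde{\times} S^1=\partial N_p$ for $i=1,2$. I would then fix gluing homeomorphisms $\theta_i:\partial(M_i\setminus\nu(\alpha_{M_i}))\rightarrow\partial N_p$ realizing the decompositions $M_i(p)=(M_i\setminus\nu(\alpha_{M_i}))\cup_{\theta_i} N_p$, observing along the way that by \cite[Theorem A]{[BaisTorres]} the homeomorphism type of $M_i(p)$ is independent of this choice — this is the observation that lets Construction \ref{Construction Main} suppress gluing maps from the notation in the first place.

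Next, the given homeomorphism $\phi:M_1\setminus\nu(\alpha_{M_1})\rightarrow M_2\setminus\nu(\alpha_{M_2})$ restricts, by invariance of the boundary, to a homeomorphism $\phi_\partial:\partial(M_1\setminus\nu(\alpha_{M_1}))\rightarrow\partial(M_2\setminus\nu(\alpha_{M_2}))$, so $\psi:=\theta_2\circ\phi_\partial\circ\theta_1^{-1}$ is a self-homeomorphism of $S^2\widetilde{\times} S^1=\partial N_p$. By \cite[Theorem A]{[BaisTorres]} it extends to a self-homeomorphism $h:N_p\rightarrow N_p$. I would then define $\Phi:M_1(p)\rightarrow M_2(p)$ to equal $\phi$ on $M_1\setminus\nu(\alpha_{M_1})$ and $h$ on $N_p$; the two prescriptions agree on the gluing locus because $h|_{\partial N_p}\circ\theta_1=\psi\circ\theta_1=\theta_2\circ\phi_\partial$, whence $\Phi$ is a well-defined homeomorphism and $M_1(p)$ is homeomorphic to $M_2(p)$. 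Equivalently, and perhaps more transparently, pushing the decomposition of $M_1(p)$ forward through $\phi$ exhibits $M_1(p)$ as $M_2\setminus\nu(\alpha_{M_2})$ glued to $N_p$ along $\theta_1\circ\phi_\partial^{-1}$, which by the gluing-independence just recalled is homeomorphic to $M_2(p)$.

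Given \cite[Theorem A]{[BaisTorres]}, this argument carries essentially no obstacle: the only points requiring care are the well-definedness of $\nu(\alpha_{M_i})$ — hence of the boundary $S^2\widetilde{\times} S^1$ — and the routine bookkeeping of the gluing identifications. The genuine content is packaged into \cite[Theorem A]{[BaisTorres]} itself, which I would treat as a black box here; a self-contained proof would instead have to analyze the (finite) mapping class group of $S^2\widetilde{\times} S^1$ and verify that each of its elements extends over $N_p=(S^3\widetilde{\times} S^1)\setminus\nu(\alpha^p)$, and it is there — in the interplay of the nonorientable bundle structures with the specific geometry of $S^3\widetilde{\times} S^1$ — that the real work lies.
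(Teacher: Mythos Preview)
Your proposal is correct and matches the paper's treatment: the paper does not give an independent proof of this statement but quotes it from \cite[Theorem B]{[BaisTorres]}, explicitly indicating that the content is precisely the extension property \cite[Theorem A]{[BaisTorres]} you invoke. Your reduction---restrict the given homeomorphism to the $S^2\widetilde{\times} S^1$ boundary, extend over $N_p$, and glue---is exactly the argument the paper points to.
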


Theorem \ref{Theorem Homeomorphism} implies that there is a homeomorphism $X_{2p}\rightarrow X_{2p}'$. Hence, the implication (ii)$\Rightarrow$(i) holds.

We now show that the implication (i)$\Rightarrow$(ii) follows from the results in \cite{[HKT]}. If there is a homeomorphism $X_{2p}\rightarrow X'_{2p}$, then the invariants\begin{equation}\label{Inv Ab}\{\chi, w_1^4, \Ks, \Ar\}\end{equation} of $X_{2p}$ and $X_{2p}'$ coincide. Since $X$ and $X'$ are the initial data to assemble $X_{2p}$ and $X_{2p}'$, respectively, using Construction \ref{Construction Main}, Lemma \ref{Lemma Identities} and Proposition \ref{Proposition Value Pin} imply that the invariants (\ref{Inv Ab}) of $X$ and $X'$ coincide as well. It follows by \cite[Theorem 2]{[HKT]} that there is a homeomorphism between $X$ and $X'$.

The proofs of the remaining bullet points of Theorem \ref{Theorem Homeomorphism Invariants} follow from similar arguments and the details are left to the reader. Notice that the Kirby-Siebenmann invariant vanishes in the case the 4-manifolds admit a smooth structure by Theorem \ref{Theorem KS}.\hfill$\square$


\subsection{Proof of Theorem \ref{Theorem KS}}If the initial data $X$ admits a smooth structure, then $\Ks(X) = 0$ and Construction \ref{Construction Main} produces a smooth 4-manifold $X_{2p}$. The theorem follows from Lemma \ref{Lemma Identities} and the following result.\begin{theorem}Hambleton-Kreck-Teichner \cite{[HKT]}, Ruberman-Stern \cite{[RubermanStern]}. A closed topological nonorientable 4-manifold $X$ with $\pi_1(X) = \Z/2$ admits a smooth structure if and only if $\Ks(X) = 0$.\end{theorem}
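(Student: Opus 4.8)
The plan is to prove the two implications separately, disposing of the ``only if'' direction as a triviality and reducing the ``if'' direction to the Hambleton--Kreck--Teichner classification supplemented by Ruberman--Stern. For necessity: if $X$ admits a smooth structure then, since in dimension four smooth and PL structures coincide (the obstruction to smoothing a PL manifold lives in $\pi_i(\mathrm{PL}/\mathrm{O})$, which vanishes for $i\le 6$), the classifying map of the topological tangent microbundle of $X$ lifts through $B\mathrm{PL}$, and hence the Kirby--Siebenmann obstruction $\Ks(X)\in H^4(X;\Z/2)$ vanishes \cite{[KirbySiebenmann]}. I would stress here that the converse implication is false for closed $4$--manifolds in general -- the closed simply connected topological $4$--manifold with intersection form $E_8\oplus E_8$ has $\Ks=0$ but is not smoothable by Donaldson's diagonalization theorem -- so the entire content of the statement is concentrated in the very restrictive class $\pi_1=\Z/2$, nonorientable.

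For sufficiency, suppose $\Ks(X)=0$. The idea is to invoke the homeomorphism classification \cite[Theorems 1 and 2]{[HKT]}, which asserts that $X$ is determined up to homeomorphism by the tuple $(\chi(X),w_1(X)^4,\Ks(X),\Ar(X,\Phi_X))$, together with the realization theorem \cite[Theorem 3]{[HKT]}, which produces for each admissible tuple an explicit representative assembled out of the standard pieces $\RP^4$, $\RP^2\times S^2$, $S(2\gamma\oplus\R)$, $S^2\times S^2$, $\CP^2$ and their twin and Chern-manifold variants. Then I would go through the $w_2$--trichotomy (Lemma \ref{Lemma Pin Trichotomy}, specialized to $p=1$) and check in each stratum that the representative whose invariants have $\Ks=0$ is a smooth manifold, typically because it is a connected sum or circle sum of the smooth building blocks above, whereas every non-smooth model appearing on the list (those built using $M_{E_8}$, a Chern manifold $\ast\CP^2$, or a twin $\ast(-)$) carries $\Ks=1$. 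Since admitting a smooth structure is a homeomorphism invariant, it then follows that $X$ is smoothable.

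The delicate point -- and the reason the statement is also attributed to \cite{[RubermanStern]} rather than to \cite{[HKT]} alone -- is the $w_2$--type (I) stratum, where no $\TopPin^{\pm}$--structure exists and the Arf invariant takes its ``odd'' values: there the most economical model in \cite[Theorem 3]{[HKT]} need not visibly be a connected sum of the standard smooth pieces. The plan is to supply the missing smooth representatives from the constructions of Ruberman--Stern \cite{[RubermanStern]}, who produce smooth closed nonorientable $4$--manifolds with $\pi_1=\Z/2$, vanishing Kirby--Siebenmann invariant, and the required values of $(\chi,w_1^4,\Ar)$. Combining \cite[Theorem 3]{[HKT]} with \cite{[RubermanStern]} then shows that every homeomorphism class of closed topological nonorientable $4$--manifold with $\pi_1=\Z/2$ and $\Ks=0$ contains a smooth representative, and the theorem follows.

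The step I expect to be the main obstacle is exactly this bookkeeping across the three $w_2$--types: one must verify that the union of the smooth models furnished by \cite{[HKT]} and by \cite{[RubermanStern]} exhausts \emph{all} tuples of the form $(\chi,w_1^4,0,\Ar)$, with none left unrealized by a smooth manifold. Everything else -- the necessity of $\Ks=0$ and the passage from ``smooth representative exists in the homeomorphism class'' to ``$X$ is smoothable'' -- is formal; the genuine input is the classification theorem and the explicit smooth realizations, which is why the proof is a matter of quoting \cite{[HKT]} and \cite{[RubermanStern]} together with a careful case check rather than a direct surgery-theoretic argument.
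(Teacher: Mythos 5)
The paper gives no proof of this statement at all—it is quoted as a known theorem of Hambleton--Kreck--Teichner and Ruberman--Stern—so there is no argument to compare against line by line, and your reconstruction is correct and matches the intended attribution. Necessity of $\Ks=0$ is the standard Kirby--Siebenmann obstruction argument, and sufficiency is exactly as you describe: the realization list of \cite[Theorem 3]{[HKT]} provides smooth models for all but one homeomorphism class with $\Ks=0$, and the missing $w_2$-type (I) class (that of the twin $\ast(\RP^4\cs\ast\CP^2)$, homotopy equivalent but not homeomorphic to $\CP^2\cs\RP^4$) is realized smoothly by the fake $\CP^2\cs\RP^4$ of \cite{[RubermanStern]}, which is precisely why the theorem is credited to both sources.
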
\hfill $\square$

\subsection{Proof of Theorem \ref{Theorem Stable Homeomorphism Classes}}\label{Section Proof of Theorem Stable Homeomorphism Classes}Debray computed a total of 20 stable homeomorphism classes of closed topological nonorientable 4-manifolds with finite fundamental group $\pi_1$ of order 2 mod 4 \cite[Main Theorem]{[Debray]} using Kreck's modified surgery theory \cite{[Kreck]}. Debray showed that there is a 1-to-1 correspondence between stable homeomorphism classes with elements in the fourth $H$-bordism group where $H = \Top$ for $w_2$-type (I), $H = \TopPin^-$ for $w_2$-type (II), and $H = \TopPin^+$ for $w_2$-type (III). A breakdown of these equivalence classes yields 8 classes for 4-manifolds that do not admit a $\TopPin^\pm$-structure, 2 classes of 4-manifolds that admit a $\TopPin^-$-structure and 10 classes of 4-manifolds that admit a $\TopPin^+$-structure. When the fundamental group is in addition cyclic, the equivalence classes of the first kind are realized by the 4-manifolds\begin{center}$R^1_{2p, 1}\cs \CP^2$, $R^1_{2p, 1}\cs 2\cdot \CP^2$, $A_{2p, 1}\cs \CP^2$, $A_{2p, 1}\cs 2\cdot \CP^2$, $\ast R^1_{2p, 1}\cs \CP^2$, $\ast R^1_{2p, 1}\cs 2\cdot \CP^2$, $\ast A_{2p, 1}\cs \CP^2$ and $\ast A_{2p, 1}\cs 2\cdot \CP^2$.\end{center} In this case, the fourth $\Top$-bordism group is $\Omega^{\Top}_4 = \Z/2\oplus \Z/2\oplus \Z/2$ \cite[Proposition 5.7]{[Debray]}, where $\{w_4, w_2^2, \Ks\}$ form a set of invariant for these bordism classes. 

The 2 stable homeomorphism classes in the presence of a $\TopPin^-$-structure are realized by\begin{center}$A_{2p, 1}$ and $\ast A_{2p, 1}$.\end{center}The fourth $\TopPin^-$-bordism group is $\Omega^{\TopPin^-}_4 = \Z/2$ \cite[Theorem 9.2]{[KirbyTaylor]} and the classes are distinguished by the Kirby-Siebenmann invariant. 

Finally, the 10 stable homeomorphism classes in the presence of a $\TopPin^+$-structure are realized by
\begin{center}$R^r_{2p, 1}$, $\ast R^r_{2p, 1}$, $B_{2p, 1}$ and $\ast B_{2p, 1}$\end{center}for $1\leq r \leq 4$. In this case we have that the fourth $\TopPin^+$-bordism group is given by $\Omega^{\TopPin^+}_4 = \Z/8\oplus \Z/2$ \cite[Theorem 9.2]{[KirbyTaylor]}.\hfill $\square$

\subsection{Proof of Theorem \ref{Theorem Cancellation Homeomorphism Classes}}\label{Section Proof Theorem Cancellation}Let $Y_1$ and $Y_2$ be closed topological 4-manifolds with finite fundamental group. A cancellation result of Khan \cite{[Khan]} states that if there is an integer $r\in \Z_{\geq 0}$ such that there is a homeomorphism $Y_1\cs r\cdot(S^2\times S^2)\rightarrow Y_2\cs r\cdot(S^2\times S^2)$, then there is a homeomorphism\begin{equation*}Y_1\cs2 \cdot(S^2\times S^2)\rightarrow Y_2\cs 2\cdot(S^2\times S^2).\end{equation*}The first bullet point of Theorem \ref{Theorem Cancellation Homeomorphism Classes} follows from coupling Khan's result with Theorem \ref{Theorem Stable Homeomorphism Classes} and the list in Collection \ref{Collection Manifolds}. The second bullet point of Theorem \ref{Theorem Cancellation Homeomorphism Classes} follows from coupling Khan's result with Theorem \ref{Theorem Homeomorphism Invariants}.\hfill$\square$


\begin{thebibliography}{99}
\bibitem{[BahriGilkey]} A. Bahri and P. Gilkey, \emph{The Eta invariant, $\Pin^c$ bordism, and equivariant $\Sp^c$ bordism for cyclic 2-groups}, Pacific J. Math. (1997), 1 - 24.

\bibitem{[BaisTorres]} V. Bais and R. Torres, \emph{A cut-and-paste construction mechanism to introduce fundamental group and construct new four-manifolds}, arXiv:2505.23167, 2025.

\bibitem{[Debray]} A. Debray, \emph{Stable diffeomorphism classification of some unorientable 4-manifolds}, Bull. Lond. Math. Soc. 54 (2022), 2219 - 2231. 



\bibitem{[FNOP]} S. Friedl, M. Nagel, P. Orson and M. Powell, \emph{The foundations of four-manifold theory in the topological category}, New York Journal of Math. Monographs, Vol. 6, 2025.  https://nyjm.albany.edu/m/2025/6.htm

\bibitem{[Gilkey]} P. Gilkey, \emph{The $\eta$-invariant for even dimensional $\Pin^c$-manifolds}, Adv. Math. 58 (1985), 243 - 284.



\bibitem{[HKT]} I. Hambleton, M. Kreck and P. Teichner, \emph{Nonorientable 4-manifolds with fundamental group of order 2}, Trans. Amer. Math. Soc. 344 (1994), 649 - 665. 

\bibitem{[HS]} I. Hambleton and Y. Su, \emph{On certan 5-manifolds with fundamental group of order 2}, Quart. J. Math. 64 (2013), 149 - 175.

\bibitem{[Hsu]} F. Hsu, \emph{4-dimensional topological bordism}, Top. Appl. 26 (1987), 281 - 285.




\bibitem{[Khan]} Q. Khan, \emph{Cancellation for 4-manifolds with virtually abelian fundamental group}, Topol. Appl. 220 (2017), 14 - 30.

 
\bibitem{[KirbySiebenmann]} R. C. Kirby and L. C. Siebenmann, \emph{Foundational essays on topological manifolds, smoothings and triangulations}, Ann. of Math. Stud. 88 Princeton University Press, Princeton, NJ; University of Tokyo Press, Tokyo, 1977. vii + 355 pp.

\bibitem{[KirbyTaylor]} R. C. Kirby and L. Taylor, \emph{$Pin$ structures on low-dimensional manifolds}, in Geometry of low-dimensional manifolds: 2 Symplectic manifolds and Jones-Witten Theory, ed. S. K. Donaldson and C.B. Thomas, London Math. Soc. Lecture Notes Series 151, p. 177 - 242, (1990).


\bibitem{[Kreck]} M. Kreck, \emph{Surgery and duality}, Ann. of Math. 149 (1999), 707 - 754.

\bibitem{[LT]} R. Lashof and L. Taylor, \emph{Smoothing theory and Freedman's work on four-manifolds}, Algebraic Topology, Aarhus 1982 (Aarhus, 1982), Lect. Notes in Math. 1051, Springer, Berlin, 1984, 271 - 292. 

\bibitem{[RubermanStern]} D. Ruberman and R. J. Stern, \emph{A fake smooth $\mathbb{CP}^2\cs \mathbb{RP}^4$}, Math. Res. Lett. 4 (1997), 375 - 378.

\bibitem{[Stolz]} S. Stolz, \emph{Exotic structures on 4-manifolds detected by spectral invariants}, Invent. Math. 94 (1988), 147 - 162.



\end{thebibliography}
\end{document}